\newtheorem{theorem}{Theorem}[section]
\newtheorem{lemma}[theorem]{Lemma}
\newtheorem{prop}[theorem]{Proposition}
\newtheorem{cor}[theorem]{Corollary}
\theoremstyle{definition}
\newtheorem{definition}[theorem]{Definition}
\newtheorem{remark}[theorem]{Remark}
\newtheorem{claim}{Claim}
 \newenvironment{claimproof}{\begin{proof}}{\end{proof}}
 \def \GS{\operatorname{GS}}
  \def \NFOP{\operatorname{NFOP}}
      \def \VC{\operatorname{VC}}
    \def \NXOP{\operatorname{NXOP}}
  \def\disc{\operatorname{disc}}
\begin{document}

\title{Averages of hypergraphs and higher arity stability}
\author{Artem Chernikov and Henry Towsner}
\date{\today}

\begin{abstract}
We show that $k$-ary functions giving the measure of the intersection of multi-parametric families of sets in probability spaces, e.g.~$(x,y,z) \in X \times Y \times Z \mapsto \mu(P_{x,y} \cap Q_{x,z} \cap R_{y,z})$, satisfy a particularly strong form of hypergraph regularity. More generally, this applies to the (integral) averages of continuous combinations of functions of smaller arity. This result is connected to higher arity stability in model theory, that we discuss in the second part of the paper. We demonstrate that all hypergraphs embedding both into the half-simplex and into $\GS(\mathbb{F}_3)$, the two known sources of failure of ternary stability, do satisfy an analogous regularity lemma --- hence strong ternary stability cannot be characterized simply by excluded hypergraphs.
\end{abstract}

\maketitle

\section{Introduction}

Consider a function $f: X \times Y \to \mathbb{R}_{\geq 0}$ whose value is the measure of an intersection of measurable sets---we have families $\{P_x\}_{x\in X}$ and $\{Q_y\}_{y\in Y}$ in some probability space and set
\[f(x,y) := \mu(P_x\cap Q_y).\]
Such functions turn out to be quite special. Combining model-theoretic stability of such functions (\cite{hrushovski2012stable}, see discussion below) with an improved form of Szemer\'edi's regularity lemma for stable graphs \cite{malliaris2014regularity} (or more precisely, its continuous version, see e.g.~\cite{chavarria2024continuous}) shows that any function of this kind is almost constant on a small number of rectangles: for any $\varepsilon>0$ there are partitions $X=\bigcup_{i\leq n}X_i$ and $Y=\bigcup_{j\leq n}Y_j$ with $n =n (\varepsilon)$ so that for each rectangle $X_i\times Y_j$ there is a value $\alpha_{i,j} \in [0,1]$ and a small exceptional set $Z\subseteq X_i\times Y_j$ with $|Z|<\varepsilon|X_i\times Y_j|$ so that $f[X_i\times Y_j\setminus Z]\subseteq (\alpha_{i,j}-\varepsilon,\alpha_{i,j}+\varepsilon)$.

Our first result in this paper is a higher-arity analog of this result in which the measurable sets are indexed by multiple overlapping coordinates. Specifically, fix a $k$, we suppose we have sets $X_1,\ldots,X_k$ and, for each $e\in\binom{[k]}{k-1}$ (or, more generally, $e\in\binom{[k]}{d}$ for any $d<k$; where $[k] = \{ 1, 2, \ldots, k \}$), a family of measurable sets $\{P^e_{\vec x}\}_{\vec x\in\prod_{i\in e}X_i}$. We can define a function $f:\prod_{i \in [k]}X_i\rightarrow[0,1]$ by
\[f(\vec x) := \mu \left(\bigcap_{e \in\binom{[k]}{k-1} } P^e_{\vec x_e} \right),\]
where the notation $\vec x_e$ means the sub-tuple of $\vec x$ with coordinates from $e$---when $\vec x$ is $(x_1,\ldots,x_k)$, $\vec x_e$ is $(x_i)_{i\in e}$. We show:
\begin{quote}
for any $\varepsilon>0$ there exist partitions $\prod_{i\in e}X_i=\bigcup_{j\leq n}X^e_j$ with $n = n(\varepsilon, k)$ for each $e\in\binom{[k]}{k-1}$ so that each $|X^e_0|<\varepsilon|\prod_{i\in e}X_i|$ and for each \emph{cylinder intersection set} $\bigcap_e X^e_{j_e} = \{\vec{x} \in \prod_{i\in [k]}X_i : \bigwedge_{e} (\vec{x}_{e} \in X^e_{j_e}) \}$ with every  $j_e>0$, there is a value $\alpha_{\{j_e\}} \in [0,1]$ and an exceptional set $Z\subseteq \bigcap_eX^e_{j_e}$ with $|Z|<\varepsilon|\bigcap_e X^e_{j_e}|$ so that
\[f \left[ \bigcap_eX^e_{j_e}\setminus Z \right]\subseteq \left(\alpha_{\{j_e\}}-\varepsilon, \alpha_{\{j_e\}}+\varepsilon \right).\]
\label{state_main}
\end{quote}
A stronger version of this result is Theorem \ref{thm:main}. Dually, we can think about this as a presentation theorem for the averages of hypergraphs (which explains the title). Namely, for every $z \in \Omega$ we are given a $k$-hypergraph $E_z \subseteq \prod_{i \in [k]} X_i$ defined by the cylinder intersection of $d$-ary hypergraphs $E_z = \bigcap_{e} P^{e}_z$ with $P^{e}_z = \{\vec{x} \in \prod_{i \in e} X_i : z \in P^e_{\vec{x}}\}$, and the $[0,1]$-function  $f(\vec{x}) = \mu \left( \bigcap_{e} P^e_{\vec x_e}  \right) = \int_{\Omega} E_z d\mu(z)$ is the ``average'' of the hypergraphs $E_z$ over all $z \in \Omega$. Our result shows that such ``average'' hypergraphs satisfy a particularly strong form of hypergraph regularity. In fact, in Corollary \ref{cor: part for integrals} we show that the same conclusion holds for more general functions of the form $f(\vec x) := \int_{\Omega} h \left( \left( f^e_{\vec x_e}(z) : e\in\binom{[k]}{d} \right)  \right) d \mu(z)$ where $h: [0,1]^{\binom{k}{d} } \to [0,1]$ is an arbitrary continuous function and   for each $e\in\binom{[k]}{d}$ and each $\vec x\in\prod_{i\in e}X_i$, $f^e_{\vec x}: \Omega \to [0,1]$ is a measurable function (e.g.~functions of the form $f(x_1,x_2,x_3) = \int_{G} g(x_1 \cdot x_2 \cdot z) \cdot  h(x_1 \cdot x_3 \cdot z) \cdot \iota (x_2 \cdot x_3 \cdot z)   dz$, for arbitrary  functions $g, h, \iota: G \to [0,1]$ on an amenable group $G$). 

When $k=2$ and $d=1$, the result in the previous paragraph may appear slightly weaker than the decomposition  guaranteed by stable graph regularity for $\mu(P_x\cap Q_y)$ which we described above: instead of \emph{every} rectangle having the desired property, what this implies is that there are small exceptional sets $X^1_0$ and $X^2_0$, and only the rectangles avoiding them have the desired property. It is easy to recover the version with no exceptional rectangles, however, because we can ``hide'' the sets $X^1_0$ and $X^2_0$ inside the other sets: we partition $X^1_0$ into small pieces $X^1_0=\bigcup_i X^*_i$ where the size of $X^*_i$ is proportionate to the size of $X^1_i$, and then use the new partition $X_1=\bigcup_i (X^1_i\cup X^*_i)$. Even with the exceptional rectangles, this is still stronger than what graph regularity would tell us about an arbitrary function: graph regularity says that there are a small number of exceptional rectangles, but this result implies that these exceptional rectangles are determined by a single coordinate.

The version with no exceptional rectangles at all does not appear to generalize to higher arity. Instead, we prove a different strengthening of: instead of the exceptional sets $Z$ having a fixed density $\varepsilon$, we can fix a function $F: \mathbb{N} \to (0,1]$ and ask that $Z$ have density $<F(m)$ where $m$ is the size of the partitions (see Theorem \ref{thm:main} for the exact statement). That is, we can ask for an analog of strong graph regularity \cite{alon2007efficient} instead of ordinary graph regularity. Even when $k=2,d=1$, this strengthening requires us to allow exceptional rectangles in the restricted way above. See \cite{chernikov2024perfect} for a more detailed discussion of these different versions in the $k=3$, $d=1$ setting.

While not necessary to understand the statement or proof, the motivation for this result comes from the investigation of tame regularity lemmas and connections to model theory (we refer to the introduction of \cite{chernikov2024perfect} for a brief survey of the subject). Szemer\'edi's regularity lemma guarantees that any function $f$ admits a partition into rectangles so that, on most rectangles, $f$ is quasi-random. The result of \cite{malliaris2014regularity}, often called ``stable regularity'', says that if $f$ is \emph{stable} then this partition can be improved so that $f$ is nearly constant (not just quasi-random) on every (not just most) rectangle. Stability is mostly studied in model theory, but has a simple combinatorial characterization in terms of excluded configurations.

\begin{definition}\label{def:stable}
 A function $f: X \times Y \to [0,1]$ is stable if, for each $\delta > 0$, there is a $k \in \mathbb{N}$ so that there do not exist sequences $x_1,\ldots,x_k$ in $X$ and $y_1,\ldots,y_k$ in $Y$ and a value $\alpha \in [0,1]$ so that:
  \begin{itemize}
  \item whenever $i<j$, $f(x_i,y_j)<\alpha$,
  \item whenever $j<i$, $f(x_i,y_j)>\alpha+\delta$.
  \end{itemize}
\end{definition}

\noindent It appears explicitly in \cite{hrushovski2012stable} that when $f(x,y)=\mu(P_x\cap Q_y)$, $f$ is stable in this sense. 
The result follows from stability of probability algebras in continuous logic \cite{yaacov2013theories, yaacov2008model}, and can be traced back to \cite{krivine1981espaces} and even \cite{grothendieck1952criteres}. 
See also \cite{Tao} for an elementary proof. This result found various applications, e.g.~in the classification of approximate subgroups \cite{hrushovski2012stable}, or a simplified proof of algebraic regularity lemma \cite{zbMATH06476712, arXiv:1310.7538, Tao}.

When $f$ is a function with $k$ inputs for $k>2$, regularity gets replaced by hypergraph regularity \cite{nagle2006counting, rodl2004regularity, gowers2007hypergraph, tao2006variant}. Recent work \cite{chernikov2020hypergraph, terry2021higher, terry2021irregular, chernikov2024perfect} has attempted to identify higher-arity analogs of model theoretic tameness notions by hypothesizing what the analogous version of hypergraph regularity would be.

Terry and Wolf showed \cite{terry2021irregular, terry2021higher} that there are at least two distinct potential notions of ``ternary stability'' (i.e.~stability for ternary relations; also called \emph{$2$-stability} as a property of ternary relations approximable by binary relations in an appropriate sense). They call the weaker of these notions $\NFOP_2$ (``not the functional order property''), but the stronger notion is not well understood and does not yet have a name (they refer to it as $\NXOP_2$, where $X$ indicates that the property is unknown). For our purposes here, we will call it ``strong $2$-stability''. (We expect that a real name awaits a clearer picture of the various notions of higher-arity stability.) Our result here suggests that the function $f(x,y,z)=\mu(P^{1,2}_{x,y}\cap P^{1,3}_{x,z}\cap P^{2,3}_{y,z})$ has strong $2$-stability, along with analogous results for functions of $k+1 \geq 3$  variables and strong $k$-stability.\footnote{A fuller account of strong $2$-stability would include both a combinatorial formulation and an equivalent regularity-type property. The property considered here seems to be the most natural finitistic consequence of the correct ``perfect'' regularity-type property that holds in the ultraproduct (see the discussion in Section \ref{sec: ultraprod and perf reg}), but may not actually be equivalent. See \cite{chernikov2024perfect} for a detailed investigation of this issue---the distinction between perfect, strong and ordinary regularity---for ordinary stability.} 

The relevant result from \cite{terry2021irregular, terry2021higher} is to identify some ternary relations which do \emph{not} have strong $2$-stability. They show that the half-simplex $\{(x,y,z)\in[0,1]\mid x+y+z<1\}$ does not have strong $2$-stability, and neither do the relations $\GS_n$, $n \geq 3$, on the set of infinite sequences from $\{0,1,\ldots,n-1\}$ where $(x,y,z)\in \GS_n$ if, for the first $i$ for which $x(i)+y(i)+z(i)\mod n\neq 0$, $x(i)+y(i)+z(i)=1$ (Definition \ref{def: half-simp and GS}). Neither of these two hypergraphs embeds into the other (as an induced sub-hypergraph, see Section \ref{sec: half-simp and GS}), therefore neither can serve as a family of excluded structures characterizing strong $2$-stability analogous to the characterization of stability given by Definition \ref{def:stable}.

In Theorem \ref{thm: embeds into GS3 and simplex}, we show that any family of $3$-hypergraphs that embed both into the half-simplex and into $\GS_3$ \emph{do} satisfy strong $2$-stability. In fact we show a stronger result: any ultraproduct of \emph{monotone} hypergraphs (Definition \ref{def: monotone}) that embed into $\GS_3$  satisfies what we call \emph{perfect} $2$-stable regularity lemma  (see Definition \ref{def: perf 2-stab reg}, and Proposition \ref{prop: perf implies strong}). It follows that, unlike stability and $\NFOP_2$ (and other related notions like $n$-dependence/finite $\VC_k$-dimension \cite{chernikov2019n, chernikov2020hypergraph}), strong $2$-stability cannot be characterized simply by excluded hypergraphs.

Hrushovski's proof that $\mu(P_x\cap Q_y)$ is stable relied on the combinatorial characterization and an Aldous-Hoover-Kallenberg presentation for exchangeable  arrays of random variables. Similarly, our earlier result that the functions considered in Theorem \ref{thm:main} (and Corollary \ref{cor: part for integrals}) have a weaker property of finite VC$_k$ dimension relied on a combinatorial characterization, structural Ramsey theory and an Aldous-Hoover-Kallenberg style presentation (this is a special case of 
\cite[Theorem 10.7]{chernikov2020hypergraph}, see also \cite{arXiv:2406.18772} for an exposition).  In the absence of a combinatorial characterization of strong $2$-stability, we cannot imitate these proofs. Instead, we give a new argument, constructing the partition directly by manipulating partitions coming from hypergraph regularity. For a connection of these type of results to higher amalgamation/uniqueness in model theory, see the discussion \cite[Appendix B]{hrushovski2024approximate} and \cite{CheOber}.

\section{Averages of hypergraphs (and of continuous combinations of functions)}\label{sec:main}

For $P \subseteq \prod_{i \in [k]} X_i$, $e \subseteq [k]$ and $\vec{x} = (x_i : i \in e) \in \prod_{i \in e} X_i$, we write $P_{\vec{x}} = \{ \vec{x}' \in \prod_{i \in [k] \setminus e} X_i: \bigwedge_{i \in e} (x_i = x'_i) \}$ for the \emph{slice of $P$ at $\vec{x}$}. For  $\vec{x} = (x_i : i \in [k]) \in \prod_{i \in [k]} X_i$, we write $\vec x_e = (x_i : i\in e)$ for the sub-tuple of $\vec x$ with coordinates from $e$ (if $e = \emptyset$, $\vec{x}_e = \langle \rangle$ is the empty tuple, and $P_{\langle \rangle} = P$).

\begin{theorem}\label{thm:main}
 
 For every $1 \leq d < k \in \mathbb{N}$, $\varepsilon \in \mathbb{R}_{>0}$ and function $F: \mathbb{N} \to (0,1]$ there exists $N = N(d,k,\varepsilon, F) \in \mathbb{N}$ satisfying the following.
 
  Let finite sets $\{X_i\}_{1 \leq i\leq k}$ and a probability measure space $(\Omega,\mathcal{B},\mu)$ be given.  For each $e\in\binom{[k]}{d}$ and each $\vec x\in\prod_{i\in e}X_i$, let $P^e_{\vec x} \in \mathcal{B}$ be a measurable subset of $\Omega$. Consider the function $f:\prod_{1 \leq i\leq k}X_i\rightarrow[0,1]$ given by 
$f(\vec x) := \mu \left(\bigcap_{e\in\binom{[k]}{d}}P^e_{\vec x_e} \right)$.

  Then there exist partitions $\prod_{i\in e}X_i=\bigsqcup_{0 \leq j \leq b_{e}}S_{e,j}$ with $b_e \leq N$ for each $e\in\binom{[k]}{d}$ so that $|S_{e,0}| < \varepsilon |\prod_{i\in e}X_i|$ and for every non-empty cylinder intersection set $C=\bigcap_{e\in\binom{[k]}{d}}S_{e,j_e}$ with each $j_e>0$, there is an interval $I\subseteq [0,1]$ with $|I|<\varepsilon$ and a set $Z\subseteq C$ with $|Z|<F(\max_e \{b_e\}) |C|$ so that $f(\vec x)\in I$ for all $\vec x \in C \setminus Z$.
\end{theorem}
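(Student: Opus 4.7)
The plan is to exploit a Lipschitz-type dependence of $f$ on its $d$-ary slice data. For each $e \in \binom{[k]}{d}$ define the pseudo-metric
\[
  \rho_e(\vec{x}, \vec{x}') := \mu(P^e_{\vec{x}} \triangle P^e_{\vec{x}'})
\]
on $\prod_{i \in e} X_i$. Telescoping the symmetric difference $\bigcap_{e} P^e_{\vec{y}_e} \,\triangle\, \bigcap_{e} P^e_{\vec{y}'_e}$ one coordinate at a time yields
\[
  |f(\vec{y}) - f(\vec{y}')| \leq \sum_{e \in \binom{[k]}{d}} \rho_e(\vec{y}_e, \vec{y}'_e).
\]
The theorem thus reduces to producing, for each $e$, a partition of $\prod_{i \in e} X_i$ into a small exceptional piece $S_{e,0}$ of density less than $\varepsilon$ together with non-exceptional pieces $S_{e,j}$ equipped with centers $\vec{v}_{e,j}$, such that the $\rho_e$-ball of radius $\varepsilon/(2\binom{k}{d})$ around $\vec{v}_{e,j}$ covers all but an $F/\binom{k}{d}$-fraction of $S_{e,j}$. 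A union bound across $e$ then yields the desired interval $I$ of length less than $\varepsilon$ and exceptional set $Z$ of density less than $F$ on each cylinder intersection.

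These partitions are produced via stability. The binary function $g_e(\vec{x}, \vec{x}') := \mu(P^e_{\vec{x}} \cap P^e_{\vec{x}'})$ on $\prod_{i \in e} X_i \times \prod_{i \in e} X_i$ is of Hrushovski's form, hence stable in the sense of Definition \ref{def:stable} with universal stability constants. Applying the continuous version of the Malliaris--Shelah stable regularity lemma \cite{chavarria2024continuous} to $g_e$---using the same partition on both factors---yields a bounded partition $\prod_{i \in e} X_i = A_1 \sqcup \cdots \sqcup A_m$ with $g_e$ approximately constant on a $(1-\eta)$-fraction of each square $A_i \times A_i$. Using the identity $\rho_e(\vec{x}, \vec{x}') = \mu(P^e_{\vec{x}}) + \mu(P^e_{\vec{x}'}) - 2 g_e(\vec{x}, \vec{x}')$ and further subdividing each $A_i$ by an $\eta$-discretization of the unary density $\vec{x} \mapsto \mu(P^e_{\vec{x}})$, the mean $\rho_e$-distance on each sub-piece is forced small, provided the average overlap there is close to the average density. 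A random-center argument (selecting $\vec{v}_{e,j}$ to minimize the number of $\rho_e$-far points within the sub-piece) then converts small mean $\rho_e$ into the required pointwise diameter control outside a small exceptional fraction.

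The main technical obstacle is twofold. First, the average overlap on a piece $A_i$ may be substantially smaller than the average density there, reflecting that the slices $P^e_{\vec{x}}$ can be genuinely spread out in $\Omega$ rather than nearly identical; in that regime a single pass of stable regularity is insufficient. One must iteratively refine $A_i$ by separating points according to their $g_e$-type to chosen representatives (driving the average overlap toward the average density) and reapply stable regularity on each sub-piece; stability of $g_e$ guarantees that this iteration terminates in bounded depth. Second, the arbitrary $F$ demands exceptional fractions arbitrarily small relative to the partition size, which is handled in the manner of Alon--Fischer--Krivelevich--Szegedy strong regularity \cite{alon2007efficient} by iterating stable regularity with parameters shrinking along the partition size. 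Coordinating these iterations across the $\binom{k}{d}$ different $d$-ary index sets so that the resulting partitions interact coherently on cylinder intersections is the chief organizational task.
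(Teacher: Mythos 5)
There is a genuine gap, and it is at the very first reduction. You reduce the theorem to the claim that each $\prod_{i\in e}X_i$ can be split (off a small exceptional set) into boundedly many pieces $S_{e,j}$ of small $\rho_e$-diameter, where $\rho_e(\vec x,\vec x')=\mu(P^e_{\vec x}\triangle P^e_{\vec x'})$. The Lipschitz bound $|f(\vec y)-f(\vec y')|\leq\sum_e\rho_e(\vec y_e,\vec y'_e)$ is correct, but the covering statement it reduces to is false: the pseudometric space $(\prod_{i\in e}X_i,\rho_e)$ need not be totally bounded in any uniform sense. Take $k=2$, $d=1$, and let $\{P_x\}_{x\in X_1}$ be (approximately) mutually independent subsets of $\Omega$ of measure $1/2$; then $\rho(x,x')\approx 1/2$ for every pair $x\neq x'$, so every piece of every bounded partition contains many points pairwise at distance $1/2$, and no choice of center covers more than one point with a small ball. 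Note that the theorem itself is trivially true in this example, since $f\equiv 1/4$ off the diagonal: near-constancy of $\mu(P_{\vec x}\cap Q_{\vec y})$ does not require the slices $P_{\vec x}$ to be close to one another in $\Omega$. Your intermediate target is strictly stronger than the theorem and is unattainable.

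Your proposed repair does not close this gap. Stable regularity for $g_e(\vec x,\vec x')=\mu(P^e_{\vec x}\cap P^e_{\vec x'})$ yields pieces on which $g_e$ is nearly \emph{constant}, but what you need via the identity $\rho_e=\mu(P^e_{\vec x})+\mu(P^e_{\vec x'})-2g_e$ is that this constant nearly \emph{equals} the common density $\alpha$ of the slices. By Cauchy--Schwarz the average overlap on a piece is at least $\alpha^2$ with equality exactly in the ``spread out'' case, and no bounded refinement can push it up to $\alpha$: in the example above, every point has the same $g_e$-type ($\approx 1/4$) relative to any finite set of representatives, so your iteration is stationary at step zero while the overlap stays at $\alpha^2=1/4\neq 1/2=\alpha$. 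Stability of $g_e$ bounds half-graph configurations; it says nothing about forcing overlap to equal density. For contrast, the paper avoids this entirely by treating $\Omega$ as an extra vertex set and applying hypergraph regularity to each $(d+1)$-partite hypergraph $P^e\subseteq(\prod_{i\in e}X_i)\times\Omega$; near-constancy of $f$ on a cylinder intersection then comes from a counting-lemma computation, $f(\vec x)\approx\sum_C\prod_e\gamma_{C_e}\delta_C$ summed over regularity cells, which is insensitive to whether the individual slices are close in symmetric difference.
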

\begin{proof}
  It will be convenient to write $X_{k+1}$ for $\Omega$ and to use measure-theoretic notation (i.e.~when $A\subseteq\prod_{i\in e}X_i$, we write $\mu(A)$ for $\frac{|A|}{|\prod_{i\in e}X_i|}$). 
  For each $e\in\binom{[k]}{d}$ we consider the $(d+1)$-partite $(d+1)$-uniform hypergraph $P^e \subseteq (\prod_{i \in e} X_i) \times X_{k+1}$ defined via 
  $$P^e := \left\{(x_i)_{i \in e} {}^{\frown} x_{k+1} : (x_i)_{i \in e} \in \prod_{i\leq k+1}X_i \land x_{k+1} \in P^{e}_{(x_i)_{i \in e}}\right\}.$$
 For each $e\in\binom{[k]}{d}$,  we apply hypergraph regularity with suitable parameters $\varepsilon' \ll \varepsilon$ and $F' \ll F$ to the $(d+1)$-uniform hypergraph $P^{e}$ obtaining partitions of $\prod_{i\in e'}X_i$ for each $e' \in\binom{e \cup \{k+1\}}{ \leq d}$. And for each $e' \in\binom{[k+1]}{ \leq d}$, we obtain a partition $\prod_{i\in e'}X_i=\bigsqcup_{0 \leq j\leq k_{e'}} A_{e',j}$ by intersecting all of the partitions of $\prod_{i\in e'}X_i$ that arose from  the regular partitions for $P^{e}$ for different 
 $e \in\binom{[k]}{ d}$. We routinely abuse notation to view the sets $A_{e,j}$ as subsets of $\prod_{i\leq k}X_i$ or $\prod_{i\leq k+1}X_i$ as needed.

  We will be interested in cylinder intersection sets of various kinds. The most important will be the complete cylinder intersection sets of the form $\bigcap_{e\in\binom{[k+1]}{\leq d}}A_{e,i_e}$ for some choice of the indices $0 \leq i_e \leq k_e$, which we will call \emph{cells}.  When we have an $e\in\binom{[k]}{d}$, we will call a cylinder intersection set of the form $\bigcap_{e'\in\binom{e\cup\{k+1\}}{\leq d}}A_{e',i_{e'}}$ for some choice of $i_{e'}$'s an \emph{$e$-face}, or just a \emph{face}; and we call those of the form $\bigcap_{e\in\binom{[k]}{\leq d}}A_{e,i_e}$ a \emph{base}. We will typically call cells $C$, and write $C_e$ for its $e$-face and $C_{[k]}$ for its base.

  To motivate this terminology, consider two cases.   
  
  \

\tikzset{every picture/.style={line width=0.75pt}} 

\begin{tikzpicture}[x=0.75pt,y=0.75pt,yscale=-1,xscale=1]

\draw   (94.88,46.73) -- (140.99,128.44) -- (48.76,128.44) -- cycle ;
\draw   (324.7,39.34) -- (383.82,129.97) -- (290.17,129.97) -- cycle ;
\draw  [fill={rgb, 255:red, 155; green, 155; blue, 155 }  ,fill opacity=0.54 ][line width=1.5]  (324.7,40.04) -- (317.97,153.55) -- (225.75,153.55) -- cycle ;
\draw    (290.17,129.97) -- (225.75,152.86) ;
\draw    (383.82,129.97) -- (317.97,152.86) ;
\draw  [fill={rgb, 255:red, 0; green, 0; blue, 0 }  ,fill opacity=1 ] (92.11,49.5) .. controls (92.11,47.97) and (93.35,46.73) .. (94.88,46.73) .. controls (96.41,46.73) and (97.65,47.97) .. (97.65,49.5) .. controls (97.65,51.03) and (96.41,52.27) .. (94.88,52.27) .. controls (93.35,52.27) and (92.11,51.03) .. (92.11,49.5) -- cycle ;
\draw  [fill={rgb, 255:red, 0; green, 0; blue, 0 }  ,fill opacity=1 ] (138.22,128.44) .. controls (138.22,126.91) and (139.46,125.67) .. (140.99,125.67) .. controls (142.52,125.67) and (143.76,126.91) .. (143.76,128.44) .. controls (143.76,129.97) and (142.52,131.21) .. (140.99,131.21) .. controls (139.46,131.21) and (138.22,129.97) .. (138.22,128.44) -- cycle ;
\draw  [fill={rgb, 255:red, 0; green, 0; blue, 0 }  ,fill opacity=1 ] (47.38,128.44) .. controls (47.38,126.91) and (48.62,125.67) .. (50.15,125.67) .. controls (51.68,125.67) and (52.92,126.91) .. (52.92,128.44) .. controls (52.92,129.97) and (51.68,131.21) .. (50.15,131.21) .. controls (48.62,131.21) and (47.38,129.97) .. (47.38,128.44) -- cycle ;
\draw  [fill={rgb, 255:red, 0; green, 0; blue, 0 }  ,fill opacity=1 ] (321.93,40.04) .. controls (321.93,38.51) and (323.17,37.27) .. (324.7,37.27) .. controls (326.23,37.27) and (327.47,38.51) .. (327.47,40.04) .. controls (327.47,41.57) and (326.23,42.81) .. (324.7,42.81) .. controls (323.17,42.81) and (321.93,41.57) .. (321.93,40.04) -- cycle ;
\draw  [fill={rgb, 255:red, 0; green, 0; blue, 0 }  ,fill opacity=1 ] (315.89,152.16) .. controls (315.89,150.64) and (317.13,149.4) .. (318.66,149.4) .. controls (320.19,149.4) and (321.43,150.64) .. (321.43,152.16) .. controls (321.43,153.69) and (320.19,154.93) .. (318.66,154.93) .. controls (317.13,154.93) and (315.89,153.69) .. (315.89,152.16) -- cycle ;
\draw  [fill={rgb, 255:red, 0; green, 0; blue, 0 }  ,fill opacity=1 ] (225.06,152.16) .. controls (225.06,150.64) and (226.3,149.4) .. (227.83,149.4) .. controls (229.36,149.4) and (230.6,150.64) .. (230.6,152.16) .. controls (230.6,153.69) and (229.36,154.93) .. (227.83,154.93) .. controls (226.3,154.93) and (225.06,153.69) .. (225.06,152.16) -- cycle ;
\draw  [fill={rgb, 255:red, 0; green, 0; blue, 0 }  ,fill opacity=1 ] (287.4,129.28) .. controls (287.4,127.75) and (288.64,126.51) .. (290.17,126.51) .. controls (291.7,126.51) and (292.94,127.75) .. (292.94,129.28) .. controls (292.94,130.81) and (291.7,132.05) .. (290.17,132.05) .. controls (288.64,132.05) and (287.4,130.81) .. (287.4,129.28) -- cycle ;
\draw  [fill={rgb, 255:red, 0; green, 0; blue, 0 }  ,fill opacity=1 ] (380.36,129.97) .. controls (380.36,128.44) and (381.6,127.2) .. (383.13,127.2) .. controls (384.66,127.2) and (385.9,128.44) .. (385.9,129.97) .. controls (385.9,131.5) and (384.66,132.74) .. (383.13,132.74) .. controls (381.6,132.74) and (380.36,131.5) .. (380.36,129.97) -- cycle ;
\draw    (355,69) -- (325.51,94.69) ;
\draw [shift={(324,96)}, rotate = 318.95] [color={rgb, 255:red, 0; green, 0; blue, 0 }  ][line width=0.75]    (10.93,-3.29) .. controls (6.95,-1.4) and (3.31,-0.3) .. (0,0) .. controls (3.31,0.3) and (6.95,1.4) .. (10.93,3.29)   ;
\draw [line width=2.25]    (94.88,49.5) -- (49.38,128.44) ;

\draw (31.25,128.43) node [anchor=north west][inner sep=0.75pt]  [font=\footnotesize]  {$X_{1}$};
\draw (142.13,130.45) node [anchor=north west][inner sep=0.75pt]  [font=\footnotesize]  {$X_{2}$};
\draw (29.01,29.02) node [anchor=north west][inner sep=0.75pt]  [font=\footnotesize]  {$A_{3} ,i_{3} \subseteq X_{3} =\Omega $};
\draw    (-1.31,-0.85) -- (77.69,-0.85) -- (77.69,19.15) -- (-1.31,19.15) -- cycle  ;
\draw (1.69,3.55) node [anchor=north west][inner sep=0.75pt]  [font=\footnotesize]  {$k=2,\ d=1$};
\draw (210.31,153.31) node [anchor=north west][inner sep=0.75pt]  [font=\footnotesize]  {$X_{1}$};
\draw (316.25,156.08) node [anchor=north west][inner sep=0.75pt]  [font=\footnotesize]  {$X_{2}$};
\draw (268.58,18.31) node [anchor=north west][inner sep=0.75pt]  [font=\footnotesize]  {$A_{5} ,i_{5} \subseteq X_{5} =\Omega $};
\draw (269.16,112.45) node [anchor=north west][inner sep=0.75pt]  [font=\footnotesize]  {$X_{3}$};
\draw (382.73,113.84) node [anchor=north west][inner sep=0.75pt]  [font=\footnotesize]  {$X_{4}$};
\draw    (178.61,0.68) -- (257.61,0.68) -- (257.61,20.68) -- (178.61,20.68) -- cycle  ;
\draw (181.61,5.08) node [anchor=north west][inner sep=0.75pt]  [font=\footnotesize]  {$k=4,\ d=2$};
\draw (-3.2,149.55) node [anchor=north west][inner sep=0.75pt]  [font=\footnotesize]  {$A_{1} ,i_{1}$};
\draw (17.72,145.88) node [anchor=north west][inner sep=0.75pt]  [font=\footnotesize,rotate=-316.39]  {$\subseteq $};
\draw (203.19,76.67) node [anchor=north west][inner sep=0.75pt]  [font=\footnotesize]  {$ \begin{array}{l}
A_{\{1,5\} ,i_{1,5}} \subseteq \\
X_{1} \ \times X_{5}
\end{array}$};
\draw (353.3,51.67) node [anchor=north west][inner sep=0.75pt]  [font=\footnotesize]  {$ \begin{array}{l}
A_{\{2,5\} ,i_{2,5}}\\
\subseteq X_{2} \ \times X_{5}
\end{array}$};
\draw (241.89,155.69) node [anchor=north west][inner sep=0.75pt]  [font=\footnotesize]  {$ \begin{array}{l}
A_{\{1,2\} ,i_{1,2}}\\
\subseteq X_{1} \ \times X_{2}
\end{array}$};
\draw (178.88,174.65) node [anchor=north west][inner sep=0.75pt]  [font=\footnotesize]  {$A_{1} ,i_{1}$};
\draw (196.16,171.06) node [anchor=north west][inner sep=0.75pt]  [font=\footnotesize,rotate=-316.39]  {$\subseteq $};
\draw (343.07,177.54) node [anchor=north west][inner sep=0.75pt]  [font=\footnotesize]  {$A_{2} ,i_{2}$};
\draw (336.18,163.67) node [anchor=north west][inner sep=0.75pt]  [font=\footnotesize,rotate=-35.76]  {$\supseteq $};
\draw (102,152.55) node [anchor=north west][inner sep=0.75pt]  [font=\footnotesize]  {$A_{2} ,i_{2}$};
\draw (122.92,148.88) node [anchor=north west][inner sep=0.75pt]  [font=\footnotesize,rotate=-316.39]  {$\subseteq $};
\end{tikzpicture}

When $k=2,d=1$, we should think of a cell as a triangle pointing up, with $X_1,X_2$ on the bottom and $X_3=\Omega$ as the top point. The faces are the vertical sides of this triangle, of the form $A_{1,i_1}\times A_{3,i_3}$ or $A_{2,i_2}\times A_{3,i_3}$, and the bottom $A_{1,i_1}\times A_{2,i_2}$ is the base.  When $k=4, d=2$, a cell is a square pyramid, with $X_1,\ldots,X_4$ forming the corners of the base and $X_5=\Omega$ as the tip; the faces are then the triangluar sides of the pyramid, like $A_{1,i_1}\cap A_{2,i_2}\cap A_{5,i_5}\cap A_{\{1,2\},i_{\{1,2\}}}\cap A_{\{1,5\},i_{\{1,5\}}}\cap A_{\{2,5\},i_{\{2,5\}}}$, while the base is precisely the square base of the pyramid.

  Finally, for $e\in\binom{[k]}{d}$, we will need to consider cylinder intersections of the form $\bigcap_{e'\in\binom{e}{\leq d}}A_{e',i_{e'}}$---that is, ``\emph{sides}'', the common part of a face and the base.  In particular, we will identify a small number of bad sides to be our bad component $S_{e,0}$, and then take $S_{e,j}$ for $j>0$ to enumerate all other sides.

Fix $e \in \binom{[k]}{d}$. When $C_e = \bigcap_{e'\in\binom{e\cup\{k+1\}}{\leq d}}A_{e',i_{e'}}$ is an $e$-face,  $P^e$ has a density $\gamma_{C_e}:= \frac{\mu(P^e\cap C_e)}{\mu(C_e)}$ on $C_e$. The choice of the partitions by hypergraph regularity ensures that the measure of the union of all $e$-faces $C_e \subseteq X_e \times X_{k+1}$ for which $P^e$ sits (sufficiently)  quasi-randomly in $C_e$ is $\geq 1 - \varepsilon_1$ (by which we mean that $P^e$ is quasi-random in $C_e$ \emph{and} $A_{e',i_{e'}}$ is quasi-random in $\bigcap_{e'' \subsetneq e'} A_{e'',i_{e''}}$ for all $e'\in \binom{e\cup\{k+1\}}{\leq d}$).  We let $S_{e,0}$ be the union of those sides $C_{e,[k]} = \bigcap_{e'\in\binom{e}{\leq d}}A_{e',i_{e'}}$ for which  there are many $e$-faces $C_e \subseteq C_{e,[k]}\times X_{k+1}$ so that $P^e$ is not sufficiently quasi-random in $C_e$ (i.e., the measure of the union of all such faces $C_e$ is close to $ \mu(C_{e,[k]}\times X_{k+1}) = \mu(C_{e,[k]})$). As no face can be contained simultaneously in two different sets $C_{e,[k]}\times X_{k+1}$, we must have $\mu(S_{e,0})$ very close to $0$. As stated in the previous paragraph, we then let the rest of the partition $S_{e,j}$ enumerate all other sides.

Now fix $j_e > 0$ for each $e \in \binom{[k]}{d}$, and consider the cylinder intersection  $C_{[k]} = \bigcap_{e\in\binom{[k]}{d}}S_{e,j_e}$. We need to show that the values of  $ \mu(\bigcap_e P^e_{\vec x_e})$ are close for most $\vec x\in C_{[k]}$.

Note that $C_{[k]}$ is precisely a base $C_{[k]}=\bigcap_{e\in\binom{[k]}{\leq d}}A_{e,i_e}$ for some values $\{i_e\}$ with the property that for each side $C_{e,[k]}=\bigcap_{e'\in\binom{e}{\leq d}}A_{e',i_{e'}}$, $P^e$ is quasi-random in most faces in $C_{e,[k]}\times X_{k+1}$ (i.e.~the total measure of such faces is almost full $\mu(C_{e,[k]}\times X_{k+1}) $).

By assumption, most cells $C\subseteq C_{[k]}\times X_{k+1}$ satisfy: $P^e$ is sufficiently quasi-random in the face $C_e$ of $C$ \emph{for all $e \in \binom{[k]}{d}$ simultaneously}, i.e.~the measure of the union of all cells $C\subseteq C_{[k]}\times X_{k+1}$ for which it holds is  almost $\mu(C_{[k]}\times X_{k+1})$. Indeed,   for each $e \in \binom{[k]}{d}$, the set of bad faces $C_e$ has measure proportional to $C_{e,[k]}$ by the previous paragraph; but, because of the quasi-randomness of those $A_{e',i_{e'}}$ with $e'$ not a subset of $e$, the set of extensions of $C_e$ to a cell extending $C_{[k]}$ has measure proportional to $C_{[k]}$.  Fix such a cell $C\subseteq C_{[k]}\times X_{k+1}$.

  Fix an ordering $\binom{[k]}{d}=\left\{e_1,\ldots,e_{\binom{k}{d}}\right\}$ and consider any $1 \leq j \leq \binom{k}{d}$. Observe that each fixed  $\vec x_{[k]\setminus e_j} \in \prod_{i \in [k]\setminus e_j} X_i$ gives a sub-cylinder intersection set of the face $C_{e_j}$ (note that $|e_j \cap e_{j'}| \leq d$ for all $j' \neq j$):
  \begin{gather*}
  	B_{e_j,\vec x_{[k]\setminus e_j}} := \\
  \bigcap_{e'\subsetneq e_j\cup\{k+1\}}\left(A_{e',i_{e'}}\cap \bigcap_{e^*\subseteq[k]\setminus e_j, | e' \cup e^*|\leq d} (A_{e'\cup e^*,i_{e'\cup e^*}})_{\vec x_{e^*}}\right) \  \cap \  \bigcap_{j'>j}P^{e_{j'}}_{\vec x_{(e_{j'} \setminus e_j) }}.
  \end{gather*}
  For example, consider the case where $k=2$, $d=1$, and $e_1=\{1\}$. A face is just a set $A_{1,i_1}\times A_{3,i_3}$ where $A_{1,i_1} \subseteq X_1$ and $A_{3,i_3}\subseteq X_3 = \Omega$. When we fix a value $x_2 \in X_2$, we get a set $B_{\{1\}, x_2} = A_{1,i_1}\times (A_{3,i_3}\cap P^{\{2\}}_{x_2})$; the quasi-randomness of $P^{\{1\}}$ in $A_{1,i_1}\times A_{3,i_3}$ lets us approximate the density of $P^{\{1\}}$ in $A_{1,i_1}\times (A_{3,i_3}\cap P^{\{2\}}_{x_2})$:
  
  $\frac{\mu \left( P^{\{1\}} \cap \left(A_{1,i_1}\times (A_{3,i_3}\cap P^{\{2\}}_{x_2}) \right) \right)}{\mu \left( A_{1,i_1}\times (A_{3,i_3}\cap P^{\{2\}}_{x_2}) \right)} \approx \gamma_{\{1,3\}} $.  And for any fixed $x_1$  we have $\left( P^{\{1\}} \cap B_{e_1,x_2}  \right)_{x_1} = \left( (P^{\{1\}} \cap P^{\{2\}}) \cap (A_{1,i_1} \times A_{2,i_2} \times A_{3,i_3}) \right)_{(x_1,x_2)}$, so for most $x_1 \in A_{1,i_1}$ we have $\mu ( (P^{\{1\}} \cap P^{\{2\}}) \cap C)_{(x_1,x_2)}) = \gamma_{\{1,3\}} \mu((B_{\{1\}, x_2})_{x_1})$.

  For a more complicated example, take $k=4$, $d=2$, and $e_1=\{1,2\}$. A $\{1,2\}$-face is an intersection of six sets of the form $\bigcap_{e\subsetneq\{1,2,5\}}A_{e,i_e}$. When we fix some $(x_3,x_4)\in X_3\times X_4$, we take a subset out of each of these five  sets: $A_{1,i_{1}}\cap \left(A_{\{1,3\},i_{\{1,3\}}} \right)_{x_3}\cap \left(A_{\{1,4\},i_{\{1,4\}}} \right)_{x_4}$ is a subset of $A_{1,i_1}$, and we get similar subsets of $A_{2,i_2}$ and $A_{5,i_5}$; and $A_{\{1,5\},i_{\{1,5\}}}\cap P^{\{1,3\}}_{x_3}\cap P^{\{1,4\}}_{x_4}$ is a subset of $A_{\{1,5\},i_{\{1,5\}}}$, and similarly for $A_{\{2,5\},i_{\{2,5\}}}$. Note that the subsets we are taking are slices of sets ``one level up''---we take subsets of the binary sets using slices of the (ternary) $P^{e}$, then subsets of the unary sets using slices of the binary sets from our partitions.

  More generally, we will take subsets of $d$-ary sets using slices of the $P^e$, then subsets of the $(d-1)$-ary sets using slices of the $d$-ary sets, and so on. This is precisely the situation that hypergraph regularity lets us control: the quasi-randomness of $P^{e_j}$ in $C_{e_j}$ tells us that $\frac{\mu \left(P^{e_j}\cap B_{e_j,\vec x_{[k]\setminus e_j}} \right)}{\mu \left(B_{e_j,\vec x_{[k]\setminus e_j}} \right)} \approx \gamma_{C_{e_j}}$, and for most $\vec x_{e_j}\in (C_{e_j, [k]})_{\vec x_{[k]\setminus e_j}}$ we have:
  \begin{align*}
    \mu((\bigcap_{j'\geq j}P^{e_{j'}}\cap C)_{\vec x})
    &=\mu((P^{e_j}\cap B_{e_j,\vec x_{[k]\setminus e_j}})_{\vec x_{e_j}}) \\
    &\approx \gamma_{C_{e_j}}\mu((B_{e_j,\vec x_{[k]\setminus e_j}})_{\vec x_{e_j}})\\
    &=\gamma_{C_{e_j}}\mu((\bigcap_{j'>j}P^{e_{j'}}\cap C)_{\vec x}),
    \end{align*}
where $\vec{x} \in \prod_{i \in [k]} X_i$ is the tuple agreeing with $\vec x_{e_j}$ and  $\vec x_{[k]\setminus e_j}$ on the corresponding coordinates, and in the first and third equalities the underlying sets are equal by definition of $B_{e_j,\vec x_{[k]\setminus e_j}}$. Let us say that the pair $(\vec x_{[k]\setminus e_j}$, $\vec x_{e_j})$ is \emph{well-behaved in $C$} if $\mu((\bigcap_{j'\geq j}P^{e_{j'}}\cap C)_{\vec x}) \approx \gamma_{C_{e_j}}\mu((\bigcap_{j'>j}P^{e_{j'}}\cap C)_{\vec x})$.

  We have shown that, for each $e_j$ with $j \leq \binom{k}{d}$, for every $\vec x_{[k]\setminus e_j}$, most $\vec x_{e_j}$ in $(C_{e_j,[k]})_{\vec x_{[k]\setminus e_j}}$ are well-behaved in $C$. Using quasi-randomness of the $A_{e',i_{e'}}$, this implies that most $\vec x\in C_{[k]}$ have the property that, for all $e_j$ simultaneously, the pair $(\vec x_{[k]\setminus e_j}, \vec x_{e_j})$ is well-behaved in $C$. Therefore we may successively apply being well-behaved for each $j \leq  \binom{k}{d} $ to get that: most $\vec{x} \in C_{[k]}$ satisfy
  \[\mu((\bigcap_{e \in   \binom{[k]}{d}} P^e\cap C)_{\vec x})\approx \prod_{e \in   \binom{[k]}{d}} \gamma_{C_e}\mu(C_{\vec x}).\]

Unfixing $C$,  we thus have shown that for most cells $C\subseteq C_{[k]}\times X_{k+1}$, most $\vec x\in C_{[k]}$ are well-behaved at $C$. Hence also most $\vec x\in C_{[k]}$ are well-behaved at most cells $C\subseteq C_{[k]}\times X_{k+1}$ simultaneously.

  Additionally, hypergraph regularity ensures that, for most cells $C\subseteq C_{[k]}\times X_{k+1}$ and most $\vec x \in C_{[k]}$, there is a $\delta_C$ so that $\mu(C_{\vec x})\approx \delta_C$. Therefore, again, for most $\vec x \in C_{[k]}$, for most $C \subseteq C_{[k]}\times X_{k+1}$ simultaneously we have $\mu(C_{\vec x})\approx\delta_C$. Therefore for most $\vec x\in C_{[k]}$, we have
  \begin{align*}
    \mu(\bigcap_{e \in \binom{[k]}{d}} P^e_{\vec x_e})
    &=\sum_{C \subseteq C_{[k]}\times X_{k+1}}\mu((\bigcap_{e} P^e\cap C)_{\vec x})\\
    &\approx \sum_C\prod_e \gamma_{C_e}\mu(C_{\vec x})\\
    &\approx\sum_C\prod_e \gamma_{C_e}\delta_C,
  \end{align*}
  concluding the proof.
\end{proof}

We generalize Theorem \ref{thm:main} from measures of intersections of sets to integrals of continuous combinations of arbitrary real valued functions:

\begin{cor}\label{cor: part for integrals}
	 For every $1 \leq d < k \in \mathbb{N}$, continuous function $h: [0,1]^{\binom{k}{d} } \to [0,1]$, $\varepsilon \in \mathbb{R}_{>0}$ and function $F: \mathbb{N} \to (0,1]$  there exists $N = N(d,k,h, \varepsilon, F) \in \mathbb{N}$ satisfying the following.
 
  Let finite sets $\{X_i\}_{1 \leq i\leq k}$ and a probability measure space $(\Omega,\mathcal{B},\mu)$ be given, and  for each $e\in\binom{[k]}{d}$ and each $\vec x\in\prod_{i\in e}X_i$, let $f^e_{\vec x}: \Omega \to [0,1]$ be a $\mathcal{B}$-measurable function. 
  Then the conclusion of Theorem \ref{thm:main} holds for the function $f:\prod_{1 \leq i\leq k}X_i\rightarrow[0,1]$ given by 
$f(\vec x) := \int_{\Omega} h \left( \left( f^e_{\vec x_e}(z) : e\in\binom{[k]}{d} \right)  \right) d \mu(z)$.
\end{cor}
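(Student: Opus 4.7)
The plan is to reduce to Theorem \ref{thm:main} by uniformly approximating the continuous function $h$ by a step function. By uniform continuity of $h$ on the compact cube $[0,1]^{\binom{k}{d}}$, I first choose $\delta>0$ so that $|h(\vec y)-h(\vec y')|<\varepsilon/3$ whenever $|\vec y-\vec y'|_\infty\leq\delta$, and partition $[0,1]$ into intervals $I_1,\ldots,I_M$ of length at most $\delta$. For each $\vec m\in[M]^{\binom{k}{d}}$ I pick a representative $\vec y^{\,\vec m}\in\prod_e I_{m_e}$ and set $c_{\vec m}:=h(\vec y^{\,\vec m})$, so that $\tilde h:=\sum_{\vec m}c_{\vec m}\mathbf{1}_{\prod_e I_{m_e}}$ approximates $h$ within $\varepsilon/3$ uniformly. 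Defining $P^{e,m}_{\vec x}:=(f^e_{\vec x})^{-1}(I_m)\in\mathcal B$ and $g_{\vec m}(\vec x):=\mu\bigl(\bigcap_e P^{e,m_e}_{\vec x_e}\bigr)$, a direct calculation (using Fubini and the indicator decomposition of $\tilde h$) gives $\tilde f(\vec x):=\int_\Omega\tilde h(\vec f_{\vec x}(z))\,d\mu(z)=\sum_{\vec m}c_{\vec m}g_{\vec m}(\vec x)$, and hence $|f(\vec x)-\tilde f(\vec x)|\leq\varepsilon/3$ for every $\vec x$.

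Each $g_{\vec m}$ is now a function of the form treated by Theorem \ref{thm:main}, and I need a single family of partitions that works for all $M^{\binom{k}{d}}$ of them simultaneously. Such a \emph{multi-function version} of Theorem \ref{thm:main} follows from essentially the same proof: the argument uses the hypergraphs $P^e$ only as inputs to hypergraph regularity, which handles any finite collection of hypergraphs at once by taking a common refinement of the partitions produced for each individually --- exactly as is already done in the proof of Theorem \ref{thm:main} across the different $e$'s --- and the downstream estimates for the densities $\gamma_{C_e}$ and $\delta_C$ and the well-behavedness of $\vec x\in C_{[k]}$ then proceed independently for each $\vec m$. I apply this version to the $M\cdot\binom{k}{d}$ families $\{P^{e,m}_{\vec x}\}$ with auxiliary parameters $\varepsilon'<\varepsilon/(3M^{\binom{k}{d}})$ and $F'$ chosen so that $M^{\binom{k}{d}}F'(n)\leq F(n)$ for all $n$. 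This yields partitions $\prod_{i\in e}X_i=\bigsqcup_{0\leq j\leq b_e}S_{e,j}$ with $|S_{e,0}|<\varepsilon|\prod_{i\in e}X_i|$ such that on each non-degenerate cell $C=\bigcap_e S_{e,j_e}$, for every $\vec m$ there are $\alpha_{\vec m}\in[0,1]$ and $Z_{\vec m}\subseteq C$ with $\mu(Z_{\vec m})<F'(\max_e b_e)\mu(C)$ and $|g_{\vec m}(\vec x)-\alpha_{\vec m}|<\varepsilon'$ for all $\vec x\in C\setminus Z_{\vec m}$.

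Setting $Z:=\bigcup_{\vec m}Z_{\vec m}$ and $\beta:=\sum_{\vec m}c_{\vec m}\alpha_{\vec m}$, one checks $\mu(Z)<F(\max_e b_e)\mu(C)$, and for $\vec x\in C\setminus Z$ one has $|\tilde f(\vec x)-\beta|\leq\sum_{\vec m}|c_{\vec m}|\varepsilon'<\varepsilon/3$, hence $|f(\vec x)-\beta|<2\varepsilon/3$; replacing the input $\varepsilon$ by $\varepsilon/2$ throughout gives the strict interval of length less than $\varepsilon$ required by the statement, and the bound $N$ depends only on $d,k,h,\varepsilon,F$ via $M(h,\varepsilon)$ and the $N$ produced by Theorem \ref{thm:main} for $(d,k,\varepsilon',F')$. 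The main conceptual point is the multi-function extension of Theorem \ref{thm:main}; rather than a genuine obstacle this is pure bookkeeping on top of hypergraph regularity, which natively supports finite families of hypergraphs, and the only substantive work is tracking the dependence of $\varepsilon'$, $F'$, and $M$ on the input parameters.
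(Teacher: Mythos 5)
Your proof is correct and follows essentially the same route as the paper: discretize the values of the $f^e$'s into finitely many level sets (equivalently, replace $h$ by a step function), thereby writing $f$, up to a uniform $O(\varepsilon)$ error, as a finite linear combination of functions of the form $\vec x\mapsto\mu\bigl(\bigcap_e P^{e,m_e}_{\vec x_e}\bigr)$, and then apply Theorem \ref{thm:main} to all of them with a single common family of partitions. The only (inessential) organizational difference is that the paper gets the common partition by applying Theorem \ref{thm:main} separately to each function and refining the \emph{output} partitions, which is why it invokes Remark \ref{rem: 2-stab reg ignore small cells} to keep refined cells from being swamped by the exceptional sets, whereas you fold the common refinement into the hypergraph-regularity step inside the proof of Theorem \ref{thm:main} itself --- both are legitimate.
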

\begin{proof}

Let $d,k, h, \varepsilon, F$ be given. For a function $g: \Omega \to [0,1]$, we write $g^{[\alpha, \beta)} = \{z \in \Omega : \alpha \leq g(z) < \beta \}$, and $\chi_S$ is the characteristic function of a set $S$.

Given $\delta_1$, we can choose a sufficiently large $n = n(\delta_1) \in \mathbb{N}$ satisfying the following. For each $e\in\binom{[k]}{d}$ and $\vec{x} \in \prod_{i\in e}X_i$, consider the simple function 
$$s^e_{\vec{x}}(z) := \sum_{i=1}^{4^n} \left( \frac{i}{2^n} \chi_{(f^e_{\vec{x}})^{[i/2^n, (i+1)/2^n)}} (z) \right);$$
\noindent then for all $e \in \binom{[k]}{d}$, for all $\vec{x} \in \prod_{i\in e}X_i$ we have: $|f^e_{\vec{x}}(z) - s^e_{\vec{x}}(z)| \leq \delta_1$ for all $z \in \Omega$.

If $\delta_1 = \delta_1(\varepsilon, k,d) \ll \varepsilon$, by uniform continuity of $h$  we then have: for all $\vec{x} \in \prod_{i\in [k]}X_i$, 
\begin{gather*}
	\left \lvert h \left( \left( f^e_{\vec x_e}\right)_{e\in\binom{[k]}{d} } \right)(z) -  h \left( \left( s^e_{\vec x_e} \right)_{e\in\binom{[k]}{d} }  \right)(z) \right \rvert \leq \varepsilon/5 \textrm{ for all } z \in \Omega \textrm{, so }\\
	\left \lvert \int_{\Omega} h \left( \left( f^e_{\vec x_e}\right)_{e\in\binom{[k]}{d} } \right) dz -  \int_{\Omega} h \left( \left( s^e_{\vec x_e}\right)_{e\in\binom{[k]}{d} } \right) dz  \right \rvert \leq  \varepsilon/5.
\end{gather*}

Let $g_{\vec{x}}(z) := h \left( \left( s^e_{\vec x_e}(z)\right)_{e\in\binom{[k]}{d} } \right)$. We can choose a sufficiently large $n' = n'(\varepsilon) \in \mathbb{N}$ so that: for all $\vec{x} \in \prod_{1 \leq i\leq k}X_i $, 
	$$\left \lvert \int_{\Omega} g(\vec{x},z) dz - \sum_{i=1}^{4^{n'}} \frac{i}{2^{n'}} \mu\left( g^{[i/2^{n'}, (i+1)/2^{n'})}_{\vec{x}} \right) \right \rvert \leq \varepsilon/5.$$

Note that for any $\vec{x}$ and any choice of $\vec{i} = (i_e : e \in \binom{[k]}{d}) \in [4^n]^{\binom{k}{d}}$, the tuple $\left( s^e_{\vec x_e}(z)\right)_{e\in\binom{[k]}{d} }$, and hence $g_{\vec{x}}(z)$, takes constant value on the set $\bigcap_{e} (f^e_{\vec{x}_e})^{[\frac{i_e}{2^n}, \frac{i_e+1}{2^n})}$. Hence for every $i \in [4^{n'}]$ there is some $J_i \subseteq [4^n]^{\binom{k}{d}}$ so that: for all $\vec{x}$,
\begin{gather*}
	g^{[i/2^{n'}, (i+1)/2^{n'})}_{\vec{x}} = \bigsqcup_{(i_e : e \in \binom{[k]}{d}) \in J_i} \bigcap_{e} (f_e)^{[i_e/2^n, (i_e+1)/2^n)}_{\vec{x}_e} \textrm{, so}\\
	\mu \left( g^{[i/2^{n'}, (i+1)/2^{n'})}_{\vec{x}} \right)  = \sum_{(i_e : e \in \binom{[k]}{d}) \in J_i} \mu \left( \bigcap_{e} (f_e)^{[i_e/2^n, (i_e+1)/2^n)}_{\vec{x}_e} \right).
\end{gather*}

Given arbitrary $\varepsilon'$ and applying Theorem \ref{thm:main} to each of the functions $\vec{x} \mapsto  \mu \left( \bigcap_{e} (f_e)^{[i_e/2^n, (i_e+1)/2^n)}_{\vec{x}_e} \right)$ with $F' = F'(F, n) = F'(F,\varepsilon)  \ll F, \varepsilon'' = \varepsilon''(\varepsilon',n) \ll \varepsilon'$ and taking a common refinement of the resulting partitions (using Remark \ref{rem: 2-stab reg ignore small cells}),
 we  find partitions $(S_{e,i} : 0 \leq i \leq b_e)$ with all $b_e \leq N = N(F',\varepsilon'')$ of $\prod_{i \in e} X_i$ so that for each cylinder intersection $C = \cap_{e} S_{e,i_e}$ with $i_e >0$ there are intervals $I_{\vec{i}} \subseteq [0,1]$ for $\vec{i} \in [4^n]^{\binom{k}{d}}$ of length $ < \varepsilon'$  and $Z \subseteq C$ with $|Z| \leq  F(b) |C|$ so that $\mu \left( \bigcap_{e} (f_e)^{[i_e/2^n, (i_e+1)/2^n)}_{\vec{x}_e} \right) \in I_{(i_e : e \in \binom{[k]}{d}) }$ for all $\vec{i}  \in [4^n]^{\binom{k}{d}}$  and all $\vec{x} \in C \setminus Z$. We can choose $\varepsilon' = \varepsilon'(\varepsilon)$ so that $4^{n'}(4^n)^{\binom{k}{d}} \varepsilon' \leq \varepsilon/5$, then we get that the value of 
 $\sum_{i=1}^{4^{n'}} \frac{i}{2^{n'}} \mu\left( g^{[i/2^{n'}, (i+1)/2^{n'})}_{\vec{x}} \right)$ belongs to some fixed interval of length $\leq \varepsilon/5$ for all $\vec{x} \in C \setminus Z$. Hence the value of $\int_{\Omega} h \left( \left( s^e_{\vec x_e}\right)_{e\in\binom{[k]}{d} } \right) dz  $ belongs to some fixed interval of length $\leq \varepsilon$ for all $\vec{x} \in C \setminus Z$, as required.
\end{proof}

\section{No Single Excluded Substructure for strong $2$-stability}\label{sec:no_contain}
%
%
\subsection{Higher arity stability and perfect regularity lemmas}\label{sec: ultraprod and perf reg}

Assume $k \in \mathbb{N}$ is fixed and we are given \emph{base sets}  $X_1, \ldots, X_k$. By a \emph{sort} we will mean an arbitrary cartesian product $X$ of the sets $X_1, \ldots, X_k$, in an arbitrary order and possibly with repetitions. 
A \emph{$k$-partite graded probability space} on the base sets $X_1, \ldots, X_k$ is given by specifying, for each sort $X$, a $\sigma$-algebra $\mathcal{B}_{X} \subseteq \mathcal{P}(X)$ of subsets of $X$ and a countably additive probability measure $\mu_{X}$ on $\mathcal{B}_{X}$ satisfying certain compatibility conditions between different sorts including closure under products and Fubini (we refer to \cite[Sections 2.2]{chernikov2020hypergraph} and \cite[Sections 2.2]{chernikov2024perfect} for a further discussion of graded probability spaces and their basic properties). 

Given a sequence of $k$-ary functions on products of $k$ finite sets of the form $(f_i; X_{1,i}, \ldots, X_{k,i})$ with $X_{u,i}$ finite and $f_i: \prod_{u \in [k]} X_{u,i} \to [0,1]$ for $i \in \mathbb{N}$ and $\mu_{e,i}$ a probability measure on $\prod_{u \in e} X_{u,i}$, we study its asymptotic behavior by considering its ultraproducts that give rise to  measurable functions on the graded probability spaces generated by internal subsets. Formally, we let $\mathcal{M}_i$ be a first-order structure in a countable language $\mathcal{L}$ so that $X_{u,i}, u \in [k]$ are definable sets in it, and for every $q \in \mathbb{Q}$ its signature contains a $k$-ary relation symbol $F^{<q}$ so that for any $(x_1, \ldots, x_k) \in \prod_{u \in [k]} X_{u,i}$, $\mathcal{M}_{i} \models F^{<q} \iff f_i(x_1, \ldots, x_k) < q$. Given $\mathcal{U}$ a non-principal ultrafilter on $\mathbb{N}$, we consider the ultraproduct $\mathcal{M}^{\ast} = (X_1, \ldots, X_k, f, \ldots) := \prod_{i \in \mathbb{N}} \mathcal{M}_i / {\mathcal{U}}$, where $X_u$ is the definable set given by the ultraproduct of the $X_{u,i}$'s, and $f: \prod_{u \in [k]} X_u \to [0,1]$ is defined via $f(x_1, \ldots, x_k) = \inf \{ q \in \mathbb{Q} : \mathcal{M}^{\ast} \models F^{<q}(x_1, \ldots, x_k)\}$. For $e \subseteq [k]$, we let $\mathcal{B}_{e,0}$ be the Boolean algebra of all internal subsets of $\prod_{u \in e} X_u$ (i.e.~sets that are ultraproducts of subsets of $\prod_{u \in e} X_{u,i}$), and let $\mathcal{B}_{e}$ be the $\sigma$-algebra generated by $\mathcal{B}_{e,0}$; then $f$ is $\mathcal{B}_{e}$-measurable. For $Y = \prod_{i \in \mathbb{N}} Y_i/\mathcal{U}  \in \mathcal{B}_{e,0}$, we let $\mu_{e}^{0}(Y) := \lim_{\mathcal{U}} \mu_{e,i}(Y_i)$. Then $\mu_{e}^{0}$ is a finitely additive probability measure on the Boolean algebra $\mathcal{B}_{e,0}$, and by Carath\'eodory's extension theorem it extends uniquely to a countably additive probability measure $\mu_{e}$ on $\mathcal{B}_{e}$. 	 Given an internal set $Y \subseteq \prod_{u \in e} X_u$ with $Y = \prod_{i \in \mathbb{N}} Y_i / \mathcal{U}$ and $r \in \mathbb{R}$,  if $ \mu_{e,i}(Y_i) \leq r$ for a $\mathcal{U}$-large set of $i \in \mathbb{N}$ then $\mu_e(Y) \leq r$; conversely,  if $ \mu_{e,i}(Y_i) \leq r$ then for any $\varepsilon > 0$ there is a $\mathcal{U}$-large set  of $i \in \mathbb{N}$ with $ \mu_{e,i}(Y_i) \leq r + \varepsilon$. This implies that for any set $Y \in \mathcal{B}_e$ with $e = e_1 \sqcup e_2$ and $r \in [0,1]$, the set $\{y \in \prod_{u \in e_2} X_u : \mu_{e_1}(Y_{y}) \square r\}$  for $\square \in \{<, \leq, =, >, \geq \}$ is in $\mathcal{B}_{e_2}$; and Fubini theorem is satisfied, so $(X_u, \mathcal{B}_e, \mu_e)$ form a graded probability space.

Without loss of generality (expanding the signature $\mathcal{L}$ of $\mathcal{M}_i$ if necessary), we may assume that our measures are \emph{definable}: for every formula $\varphi(\bar{y}, \bar{w}) \in \mathcal{L}$ with $\bar{y} = (y_u : u \in e)$ for $e \subseteq [k]$ a tuple of variables corresponding to $X_u$ and $r \in \mathbb{Q}_{>0}$, we have a formula $m_{\bar{y}} < r. \varphi(\bar{y}, \bar{w})$ in $\mathcal{L}$ so that:
	 \begin{itemize}
	 	\item for all $i \in \mathbb{N}$, $\bar{b}$ tuple in $\mathcal{M}_i$, $\mathcal{M}_i \models m_{\bar{y}} < r. \varphi(\bar{y}, \bar{b}) \Leftrightarrow  \mu_{e,i}(\varphi(\mathcal{M}_i, \bar{b})) < r$, where $\varphi(\mathcal{M}_i, \bar{b}) = \{ \bar{a} \in \prod_{u \in e}X_{u,i}: \mathcal{M}_i \models \varphi(\bar{a},\bar{b}) \}$;
	 	\item for any $\bar{b}$ in $\mathcal{M}^{\ast}$ and $\varepsilon > 0$, $\mu_{e}(\varphi(\mathcal{M}^{\ast}, \bar{b})) < r  \Rightarrow \mathcal{M}^{\ast} \models m_{\bar{y}} < r. \varphi(\bar{y}, \bar{b}) \Rightarrow \mu_{e}(\varphi(\mathcal{M}^{\ast}, \bar{b})) < r + \varepsilon$.
	 \end{itemize}

	 We refer to \cite[Section 9.3]{chernikov2020hypergraph} and \cite[Section  2.3]{chernikov2024perfect} for a  more detailed discussion of ultraproducts of graded probability spaces.

\begin{definition}\label{def: perf 2-stab reg}
\begin{enumerate}
	\item For $1 \leq d \leq k$ and a graded probability space on $X_1, \ldots, X_k$, we  say that
	a $\mathcal{B}_{[k]}$-measurable function $f : \prod_{i \in [x]} X_i \to [0,1]$ satisfies \emph{perfect $d$-stable regularity} if,  for every $\varepsilon \in \mathbb{R}_{>0}$  there exist countable partitions  $\left\{ A_{e,i} : i  \in \mathbb{N} \right\}$ of $\prod_{i \in e} X_i$  with $A_{e,i} \in \mathcal{B}_{e}$ for every  $e \in \binom{[k]}{d}$ so that:  for every $(i_e \in \mathbb{N} : e \in \binom{[k]}{d})$ there is some interval $I \subseteq [0,1]$ of length $|I| \leq  \varepsilon$ so that $f(\vec{x}) \in I$ for all $\vec{x}$ in the cylinder intersection set $ \bigcap_{e \in \binom{[k]}{d}} A_{e, i_e}$ outside of a measure $0$ subset.

	If $E \in \mathcal{B}_{[k]}$ is a hypergraph, we identify it with its characteristic function $f := \chi_{E}$, hence perfect $d$-stable regularity for $E$ requires that the density of $E$ on each $\bigcap_{e \in \binom{[k]}{d}} A_{e, i_e}$ is either $0$ or $1$.  
	
	\item We say that a family $\mathcal{F}$ of $k$-ary functions on products of $k$ finite sets of the form $(f; X_1, \ldots, X_k)$ with $X_i$ finite and $f: \prod_{i \in [k]} X_i \to [0,1]$ satisfies \emph{perfect $d$-stable regularity} if for any sequence of functions from $\mathcal{F}$, any choice of the measures $\mu_{u,i}, u \in [k]$ and any ultrafilter $\mathcal{U}$ on $\mathbb{N}$, the associated ultraproduct satisfies (1).

\item We say that a family $\mathcal{F}$ of $k$-ary functions on products of $k$ finite sets of the form $(f; X_1, \ldots, X_k)$ with $X_i$ finite and $f: \prod_{i \in [k]} X_i \to [0,1]$ satisfies \emph{strong $d$-stable regularity} if it satisfies the conclusion of Theorem \ref{thm:main}.

\end{enumerate}
\end{definition}

\begin{prop}\label{prop: perf implies strong}
	If a family $\mathcal{F}$ of $k$-ary functions on products of $k$ finite sets of the form $(f; X_1, \ldots, X_k)$ with $X_i$ finite and $f: \prod_{i \in [k]} X_i \to [0,1]$ satisfies perfect $d$-stable regularity, then $\mathcal{F}$ satisfies strong $d$-stable regularity. The converse is not true, already for $\{0,1\}$ functions, $k=3$ and $d=1$.
	
\end{prop}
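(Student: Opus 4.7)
The plan for the forward direction (perfect $\Rightarrow$ strong) is a compactness / ultraproduct argument. Suppose for contradiction that strong $d$-stable regularity fails for $\mathcal{F}$: fix witnessing parameters $\varepsilon_0 > 0$ and $F: \mathbb{N} \to (0,1]$ and choose, for each $N \in \mathbb{N}$, a counterexample $(f_N; X_{1,N}, \ldots, X_{k,N}, \{\mu_{e,N}\}) \in \mathcal{F}$ for which no partitions with sizes $b_e \leq N$ satisfy the conclusion of Theorem \ref{thm:main} for these parameters. Along a non-principal ultrafilter $\mathcal{U}$ on $\mathbb{N}$, form the ultraproduct graded probability space $(X_u, \mathcal{B}_e, \mu_e, f)$ as in Section \ref{sec: ultraprod and perf reg}. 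By hypothesis $f$ satisfies perfect $d$-stable regularity in the ultraproduct, so for some $\varepsilon_1 < \varepsilon_0$ one obtains countable $\mathcal{B}_e$-measurable partitions $\{A_{e,i}\}_{i \in \mathbb{N}}$ of each $\prod_{u \in e} X_u$ such that $f$ lies in an interval of length $\leq \varepsilon_1$ on every cylinder intersection $\bigcap_e A_{e, i_e}$ outside a $\mu_{[k]}$-null set.

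The next step is to convert this countable measurable partition into a finite internal one and transfer back to a finite model. For each $e$ I would truncate by picking $b_e$ with $\mu_e \bigl( \bigcup_{i > b_e} A_{e,i} \bigr) < \varepsilon_0/4$ and absorb the tail into a new exceptional piece. Each retained $A_{e,j}$, $1 \leq j \leq b_e$, lies in $\mathcal{B}_e$ and so can be approximated in $\mu_e$-measure within any $\eta > 0$ by an internal set (Carath\'eodory extension); after disjointification one obtains a finite internal partition $\{\tilde A_{e,j}\}_{j=0}^{b_e}$, and graded Fubini bounds the symmetric difference of each new cylinder intersection $\tilde C$ from its measurable counterpart $C$ by $\sum_e \mu_e(\tilde A_{e, i_e} \triangle A_{e, i_e})$. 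I would additionally absorb into the exceptional pieces any cylinder intersection of very small $\mu_{[k]}$-measure (cf.\ Remark \ref{rem: 2-stab reg ignore small cells}), then choose $\eta$ small relative to $b := \max_e b_e$, $F(b)$, and the minimum retained cylinder measure, so that the internal partition witnesses the conclusion of Theorem \ref{thm:main} in $\mathcal{M}^*$ with parameters $(\varepsilon_0, F)$. Existence of such a partition is a first-order statement about $\mathcal{M}^*$ expressible via the definable measure quantifiers from Section \ref{sec: ultraprod and perf reg}, so by \L o\'s's theorem it transfers to $\mathcal{M}_i$ for $\mathcal{U}$-large $i$, in particular to some $i > b$, contradicting the choice of $f_i$ as a counterexample at level $N = i$.

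The main obstacle is reconciling the $F(b)$-bound (which demands exceptional-set control \emph{relative to} $\mu_{[k]}(\tilde C)$) with the internal approximation (which is naturally controlled only in \emph{absolute} $\mu_{[k]}$-measure). This forces the order of choices: first $b$, from the single application of perfect regularity; then a lower threshold on retained cylinder measures so that $F(b) \mu_{[k]}(\tilde C)$ dominates the approximation error; and only afterwards the approximation parameter $\eta$. Cylinder intersections too small for this to work are themselves placed into the exceptional pieces.

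For the converse, asserting strong $\not\Rightarrow$ perfect already for $\{0,1\}$ functions with $k=3$, $d=1$, I would exhibit a family $\mathcal{F}$ of ternary hypergraphs each of which admits the nearly-homogeneous partition of Theorem \ref{thm:main}, yet whose ultraproduct $\chi_E$ cannot be expressed as a countable union of ``rectangles'' $A_1 \times A_2 \times A_3$ with $A_u \in \mathcal{B}_{\{u\}}$ up to a $\mu_{[3]}$-null set (this countable-rectangle form is the precise shape perfect $1$-stable regularity takes for $\{0,1\}$-valued functions, since on each cube $f$ must then be constantly $0$ or $1$ up to null). A natural candidate is a family whose ternary structure is flattened at every finite scale by finitely many unary refinements but whose ultraproduct limit encodes a genuine binary dependence from some $\mathcal{B}_{\{u,v\}}$ that no countable unary partition of the sorts can resolve. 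The analog for $k=2$ cannot occur because strong $1$-stable regularity for a $\{0,1\}$-valued graph already forces graph stability, which in turn yields a rectangle decomposition of the ultraproduct.
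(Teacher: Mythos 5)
Your forward direction is essentially the paper's own argument: take a sequence of counterexamples indexed by $N$, pass to the ultraproduct, apply perfect regularity to get countable measurable partitions, truncate to finitely many parts of almost full measure, approximate by internal sets with the approximation parameter chosen \emph{after} fixing $b$ and the minimum positive measure of the retained cylinder intersections (so that the error is dominated by $F(b)$ times that minimum), and transfer back by \L o\'s. You correctly isolate the one delicate point, namely the order of quantifiers needed to make the relative $F(b)$-bound survive the absolute approximation error; this is exactly how the paper handles it. One small imprecision: you speak of ``absorbing a cylinder intersection into the exceptional pieces,'' but the exceptional objects in the definition of strong regularity are the parts $S_{e,0}$ of the partitions of $\prod_{i\in e}X_i$, not subsets of $\prod_{i\in[k]}X_i$; the clean fix (which the paper uses) is simply to take the approximation parameter smaller than $F(b)$ times the minimum over the finitely many cylinders of positive measure, so no absorption is needed.

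For the converse, however, you have not actually given a proof: you describe the shape an example should have (strong regularity at every finite scale, but an ultraproduct whose edge relation genuinely depends on some $\mathcal{B}_{\{u,v\}}$ and cannot be resolved by countable unary partitions) without exhibiting a concrete family and verifying both properties. The paper discharges this by citing a specific known example (\cite[Example 4.16]{chernikov2024perfect}). As written, your converse is a plausible plan rather than an argument, and this is the one genuine gap in the proposal.
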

\begin{proof}
A proof for $d=1$ and $\{0,1\}$-functions is given in \cite[Remark 4.4]{chernikov2024perfect}, and it generalizes in a straightforward manner to real valued functions and higher $d$. 
	
	Indeed, assume strong $d$-stable regularity fails for $\mathcal{F}$ with a given $F: \mathbb{N} \to [0,1]$ and $\varepsilon >0$. I.e.~for every $i \in \mathbb{N}$ we have some $(f_i; X_{1,i}, \ldots, X_{k,i})$ with $X_{u,i}$ finite, probability measures $\mu_{e,i}$ and $f_i: \prod_{u \in [k]} X_{i,u} \to [0,1]$ so that $N := i$ does not satisfy the conclusion of Theorem \ref{thm:main} with respect to $\varepsilon$ and $F$. Consider the ultraproduct $X_u, f$ and the associated graded probability space $\mathcal{B}_e, \mu_e$.
	  
	  By assumption there exist countable partitions  $\left\{ A_{e,t} : t  \in \mathbb{N}_{\geq 1} \right\}$ of $\prod_{u \in e} X_u$  with $A_{e,t} \in \mathcal{B}_{e}$ for every  $e \in \binom{[k]}{d}$ so that:  for every tuple $(t_e \in \mathbb{N}_{\geq 1} : e \in \binom{[k]}{d})$ there is some interval $I \subseteq [0,1]$ of length $|I| \ll \varepsilon$ so that $f(\vec{x}) \in I$ for all $\vec{x} \in \bigcap_{e \in \binom{[k]}{d}} A_{e, t_e}$ outside of a measure $0$ subset. First,  we can choose finite $k_e \in \mathbb{N}$ so that the union of $\{ A_{e,t} : 1 \leq t \leq k_e \}$ has measure $\gg 1 - \varepsilon'$, let $k := \max_{e} k_e$. As $\mathcal{B}_e$ is the $\sigma$-algebra generated by $\mathcal{B}_{e,0}$, we can choose pairwise disjoint internal sets $A'_{e,t}$ so that 
	   $\mu( A_{e,t} \triangle A'_{e,t}) < \delta$    for all $1 \leq t \leq k_e$, where  we can take $\delta >0$ arbitrarily small. Let $\delta$ be much smaller than $F(k) $ times the minimum of the positive values of $\mu(\bigcap_{e \in \binom{[k]}{d}} A_{e, t_e})$ over all tuples $(t_e)$  with $1 \leq t_e \leq k$. Then we still have $f(\vec{x}) \in I$ for all $\vec{x} \in \bigcap_{e \in \binom{[k]}{d}} A'_{e, t_e}$ outside of a set of measure $\ll F(k) \mu(\bigcap_{e \in \binom{[k]}{d}} A'_{e, t_e})$, for all such tuples $(t_e)$. And letting $A'_{e,0}$ be the complement of the union of the $A'_{e,t}$'s we have $\mu(A'_{e,0}) \ll \varepsilon$. By \L os' theorem and definition of ultralimits, for a $\mathcal{U}$-large set of indices $i \in \mathbb{N}$ we thus have partitions $(A'_{e,t})_i$ of size $\leq k_e$ with $\mu_{e,i}((A'_{e,0})_i) \leq \varepsilon$ and $f_i(\vec{x}) \in I'$ for a slightly bigger interval with $|I'| \leq \varepsilon$ for all $\vec{x} \in \bigcap_{e \in \binom{[k]}{d}} (A'_{e, t_e})_i$ outside of a set of slightly bigger measure $\leq F(k) \mu(\bigcap_{e \in \binom{[k]}{d}} (A'_{e, t_e})_i)$, for all $t_e \geq 1$. But this contradicts the choice of $f_i$'s for $i > k$.
	   
	  Finally, an example demonstrating that perfect $1$-stable regularity is strictly stronger than $1$-stable regularity for $3$-hypergraphs is given in \cite[Example 4.16]{chernikov2024perfect}.
\end{proof}

In the definition of \emph{strong} $2$-stable regularity for $3$-hypergraphs we only require existence of partitions of pairs of vertices. This is because the  corresponding partitions of \emph{vertices} to fit with the usual definition of quasi-randomness for  $3$-hypergraphs can always be arranged a posteriori using standard arguments applying general graph regularity to each binary set in the given $2$-stable partition. We provide some details:

\begin{prop}\label{prop: str 2-stab reg implies disc}
For a family $\mathcal{H}$ of finite $3$-partite $3$-hypergraphs, strong $2$-stable regularity implies (the partite version of) \emph{binary $\disc_{2,3}$-error}, in the sense of \cite[Definition 2.11]{terry2021irregular}.

\end{prop}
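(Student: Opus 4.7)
The strategy is a two-step refinement: first invoke strong $2$-stable regularity to partition the pair sets, then apply ordinary Szemer\'edi regularity inside each non-exceptional binary class to obtain vertex partitions on which those classes become quasi-random bipartite graphs.

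Fix $H \in \mathcal{H}$ on $X_1 \sqcup X_2 \sqcup X_3$ and a target $\varepsilon > 0$. I would first apply strong $2$-stable regularity to $\chi_H$ with parameters $\varepsilon' \ll \varepsilon$ and a sufficiently fast-decreasing function $F': \mathbb{N} \to (0,1]$. This produces partitions $\prod_{i \in e} X_i = \bigsqcup_{0 \leq j \leq b_e} S_{e,j}$ with $b_e \leq N$ and $|S_{e,0}| < \varepsilon' |\prod_{i \in e} X_i|$, such that for each non-trivial cylinder intersection $C = \bigcap_e S_{e,j_e}$ (each $j_e > 0$), all but an $F'(N)$-fraction of $\vec{x} \in C$ satisfy $\chi_H(\vec{x}) \in I_C$ for some interval $I_C$ of length $< \varepsilon'$. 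Since $\chi_H$ is $\{0,1\}$-valued, each such $C$ is either \emph{full} or \emph{empty} up to relative error $\varepsilon' + F'(N)$.

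Next, for each $e = \{i,j\}$ and each $j_e > 0$, view $S_{e,j_e}$ as a bipartite graph on $X_i \times X_j$ and apply Szemer\'edi's regularity lemma with parameter $\varepsilon'' \ll \varepsilon'$. Taking a common refinement across all such $S_{e,j_e}$ of the resulting partitions of each $X_i$, one obtains partitions $X_i = \bigsqcup_{r} V^i_r$ of bounded size such that for every $e = \{i,j\}$ and every $j_e > 0$, the bipartite graph $S_{e,j_e} \cap (V^i_r \times V^j_s)$ is $\varepsilon''$-regular of some density $d^{e,j_e}_{r,s}$ for all but an $\varepsilon''$-fraction of pairs $(V^i_r, V^j_s)$. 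On each such pair let the witness bipartite graph $G^{ij}_{r,s} \subseteq V^i_r \times V^j_s$ be the union of those $S_{e,j_e} \cap (V^i_r \times V^j_s)$ of density at least $\varepsilon''$.

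On a typical vertex triple $(V^1_{r_1}, V^2_{r_2}, V^3_{r_3})$, the triangles of $G^{12}_{r_1,r_2}, G^{13}_{r_1,r_3}, G^{23}_{r_2,r_3}$ decompose according to the cylinder intersections $C$ from step one, on each of which $H$ is (up to small error) either all of $C$ or none of it. A standard triangle counting lemma for $\varepsilon''$-regular bipartite graphs then approximates the number of $H$-edges in this triangle set by the sum over \emph{full} $C$ of their triangle counts, giving the required binary $\disc_{2,3}$-error bound. The main obstacle is the parameter cascade: $\varepsilon'$, $\varepsilon''$, and $F'$ must be chosen so that the contributions of the exceptional classes $S_{e,0}$, the exceptional subsets $Z \subseteq C$ from step one, the irregular pairs from step two, and the low-density classes all sum to less than $\varepsilon$ on all but an $\varepsilon$-fraction of vertex triples.
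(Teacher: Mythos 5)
Your overall skeleton --- first the strong $2$-stable partition of the pair sets, then graph regularity on each binary class with a common refinement of the vertex partitions, then a triangle counting lemma --- is the same as the paper's. But the final step as written has a genuine gap. You define, for each pair of vertex classes $(V^i_r,V^j_s)$, a \emph{single} witness graph $G^{ij}_{r,s}$ as the union of all high-density classes $S_{e,j_e}\cap(V^i_r\times V^j_s)$, and then try to verify $\disc_{2,3}$ for the one triad $(G^{12},G^{13},G^{23})$. This cannot work: binary $\disc_{2,3}$-error requires a \emph{partition} of each $V^i_r\times V^j_s$ into quasirandom bipartite graphs such that (most of) the resulting triads each carry $H$ with density close to $0$ or close to $1$. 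The triangle set of your collapsed triad is the union of the triangle sets of many sub-triads, some full and some empty for $H$, so the density of $H$ on it is a weighted average bounded away from $0$ and $1$, and $H$ is not quasirandom relative to it. The classes $S_{e,\alpha}\cap(V^i_r\times V^j_s)$ must be kept separate, so that each triad corresponds to a single cylinder intersection $C$ restricted to a vertex triple, which is exactly where homogeneity of $H$ is available.

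Two further points you gesture at but do not resolve. First, the low-density classes (and the pieces of $S_{e,0}$) cannot simply be excluded from a ``union'': since the output must be an actual (and equitable) partition of each $V^i_r\times V^j_s$, these edges have to go somewhere. The paper absorbs them by redistributing $\bigcup_{\alpha\in\{0\}\cup J^e_{i,j}}Q^{e,\alpha}_{i,j}$ proportionally among the classes of density at least $\varepsilon^{1/2}/\ell_e$, which preserves $\disc_2$ up to a controlled loss and only perturbs each triad's triangle count by a factor $\bigl(1+O(\varepsilon^{1/2})\bigr)^3$. Second, the comparison between the exceptional set $Z\subseteq C$ and a given triad requires a \emph{lower} bound on the triad's triangle count, namely $|C'|\gtrsim(\varepsilon^{3/2}/\ell^3)|V^1_i||V^2_j||V^3_k|$ via the counting lemma applied to classes of density bounded below; only then does $|Z|<F(\ell)|S|$ with $F(\ell)\ll(\varepsilon/(\ell\,T))^3$ make $Z$ negligible inside \emph{every} good triad. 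This is the precise reason the density threshold, the redistribution, and the fast decay of $F$ are all needed, and it is the part your parameter-cascade remark would have to be expanded into for the proof to close.
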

\begin{proof}

Let $\varepsilon_1 \in \mathbb{R}_{>0}$ and $\varepsilon_2: \mathbb{N} \to (0,1]$ be arbitrary. 

	Given a sufficiently large $H = (E; X_1, X_2, X_3) \in \mathcal{H}$, by strong $2$-stable  regularity for $\varepsilon$ and monotone decreasing $F: \mathbb{N} \to (0,1]$ to be specified later, there exists $N = N(\varepsilon, F)$ and  partitions of pairs of vertices $\prod_{u \in e}X_u=\bigsqcup_{0 \leq \alpha \leq \ell_{e}}S_{e,\alpha}$ with $\ell_e \leq N$ for each $e\in\binom{[3]}{2}$ so that $|S_{e,0}| < \varepsilon |\prod_{u \in e}X_u|$ and for every non-empty cylinder intersection set $S=\bigcap_{e\in\binom{[3]}{2}}S_{e,\alpha_e}$ with each $\alpha_e>0$, there is 
	$Z\subseteq S$ with $|Z|<F(\ell) |S|$ so that all $\vec x \in 
	S \setminus Z$ are simultaneously in $E$, or simultaneously not in $E$, where $\ell := \max_{e} \ell_e$.
	
	Applying strong graph regularity with $\varepsilon_3 = \varepsilon_3(\varepsilon_4, \ell) \ll \varepsilon_4$  to each bipartite graph $S_{\{u,v\},\alpha} \subseteq X_{u} \times X_{v}$ with $1 \leq u < v \leq 3$ and $0 \leq \alpha \leq  \ell_{\{u,v\}}$, and taking a common refinement of all of these partitions (see e.g.~\cite[Lemma 3.28]{terry2021irregular}, \cite[Lemma 3.7]{frankl2002extremal}) on the same coordinate (and regrouping the parts to get an equipartition), for each $1 \leq u \leq 3$ we obtain an equipartition $\mathcal{P}_{u} = \{V^u_{i} : 0 \leq i \leq t_u \}$ of $X_u$ with $t_u \leq T = T(\varepsilon_3, \ell)$ so that: 
	for any $1 \leq u < v \leq 3$, $0 \leq \alpha \leq \ell_{\{u,v\}}$ and $(i,j) \in ([t_u] \times [t_v]) \setminus \Gamma'_{u,v}$ with $|\Gamma'_{u,v}| \leq \varepsilon_4 t_u t_v$, $S_{\{u,v\},\alpha}$ is $\varepsilon_4$-regular on $V^u_i \times V^v_j$ (i.e.~satisfies $\disc_2(\varepsilon_4)$ in the terminology of \cite[Definition 2.1]{terry2021irregular}). We let $\Gamma''_{u,v}$ be the set of all $(i,j) \in [t_u] \times [t_v]$ with $|S_{\{u,v\},0} \cap (V^u_i \times V^v_j)| \geq \varepsilon^{1/2} |V^u_i| |V^v_j|$. Since $|S_{\{u,v\},0}| < \varepsilon |X_u||X_v|$ we have $\Gamma''_{u,v} \leq \varepsilon^{1/2} t_u t_v$. Let $\Gamma_{u,v} := \Gamma'_{u,v} \cup \Gamma''_{u,v}$, then $|\Gamma_{u,v}| \leq (\varepsilon^{1/2} + \varepsilon_4) t_u t_v \leq \varepsilon_1 t_u t_v$, assuming $\varepsilon = \varepsilon(\varepsilon_1), \varepsilon_4 \ll \varepsilon_1$.
	
		For $e = \{u,v\}$,  $1 \leq u < v \leq 3$, $(i,j) \in [t_u] \times [t_v]$ and $0 \leq \alpha \leq  \ell_{e}$, we let $Q^{e,\alpha}_{i,j} := S_{e,\alpha} \cap (V^u_i \times V^v_j)$.

	Fix  $(i,j) \notin \Gamma_e$ and let $d^{e,\alpha}_{i,j}$ be the density of $Q^{e, \alpha}_{i,j}$ on $V^u_i \times V^v_j$, then $\sum_{0 \leq \alpha \leq \ell_e} d^{e,\alpha}_{i,j} = 1$. As $(i,j) \notin \Gamma_e$, we have $d^{e,0}_{i,j} \leq \varepsilon^{1/2}$. Let $J^{e}_{i,j}$ be the set of all $1 \leq \alpha \leq \ell_e$ with $d^{e,\alpha}_{i,j} <  \varepsilon^{1/2} / \ell_e$. Then $d^e_{i,j} := \sum_{\alpha \in \{0\} \cup J^{e}_{i,j}} d^{e,\alpha}_{i,j} \leq 2 \varepsilon^{1/2}$.
	
	Using that we can subdivide and take unions of sufficiently quasi-random graphs preserving enough quasi-randomness (see e.g.~\cite[Fact 3.24, Lemma 3.25]{terry2021irregular}), we can  distribute $Q^{e}_{i,j} := \bigcup_{\alpha \in \{0\} \cup J^{e}_{i,j}} Q^{e, \alpha}_{i,j}$ proportionally among  the sets $Q^{e,\alpha}_{i,j}$ with $\alpha \in [\ell_e] \setminus  J^{e}_{i,j}$, maintaining quasirandomness. Namely, for each  $\alpha \in [\ell_e] \setminus  J^{e}_{i,j}$, we define $P^{e, \alpha}_{i,j}$ by adding to $Q^{e, \alpha}_{i,j}$ at most 
	$$\frac{2 d^{e}_{i,j}}{\sum_{\alpha \in [\ell_e] \setminus  J^{e}_{i,j}} d^{e,\alpha}_{i,j}} d^{e,\alpha}_{i,j} |V_i||V_j| \leq \frac{4 \varepsilon^{1/2}}{1-2\varepsilon^{1/2}}d^{e,\alpha}_{i,j} |V_i||V_j| $$
	points from $Q^{e}_{i,j}$, obtaining a new partition $\{P^{e, \alpha}_{i,j} :\alpha \in [\ell_e] \setminus  J^{e}_{i,j} \}$ of $V^u_i \times V^v_j$ of size $\leq \ell_e$, so that still each $P^{\varepsilon, \alpha}_{i,j}$ (we let $\tilde{d}^{e, \alpha}_{i,j}$ denote its density) satisfies $\disc_2(\varepsilon_5)$, using $\varepsilon_4 = \varepsilon_4(\varepsilon_5, \ell, \varepsilon) \ll \varepsilon_5$.

Given $(i,j,k)$	with $(i,j) \notin \Gamma_{\{1,2\}}, (i,k) \notin \Gamma_{\{1,3\}}, (j,k) \notin \Gamma_{\{2,3\}}$ and \emph{arbitrary} $ \alpha \in [\ell_{\{1,2\}}] \setminus  J^{\{1,2\}}_{i,j} , \beta \in [\ell_{\{1,3\}}] \setminus  J^{\{1,3\}}_{i,k} , \gamma \in [\ell_{\{2,3\}}] \setminus  J^{\{2,3\}}_{j,k} $, consider the cylinder intersection set $C' := Q^{\{1,2\},\alpha}_{i,j} \cap Q^{\{1,3\},\beta}_{i,k} \cap Q^{\{2,3\},\gamma}_{j,k}$, and let $C$ be defined in the same way but with $P$  sets instead of $Q$ sets. Let $S := S_{\{1,2\},\alpha} \cap S_{\{1,3\},\beta} \cap S_{\{2,3\},\gamma}$, and let $Z \subseteq S$ with $|Z| <F(\ell) |S|$ be the error set as above. As $\varepsilon_5 = \varepsilon_5(\varepsilon_6) \ll \varepsilon_6$, $Q$'s and $P$'s satisfy  $\disc_2(\varepsilon_5)$ and $d^{\{1,2\},\alpha}_{i,j}, d^{\{1,3\},\beta}_{i,k}, d^{\{2,3\},\gamma}_{j,k} \geq \varepsilon^{1/2} / \ell$, by the counting lemma we have 
\begin{gather*}
	|C'| \geq \left( d^{\{1,2\}, \alpha}_{i,j} d^{\{1,3\}, \beta}_{i,k} d^{\{2,3\}, \gamma}_{j,k}  - \varepsilon_6 \right) |V^1_{i}| |V^2_{j}| |V^3_{k}| \geq (\varepsilon^{3/2} / \ell^3 - \varepsilon_6) |V^1_{i}| |V^2_{j}| |V^3_{k}|,\\
		|C| \leq \left( \left(1 + \frac{4 \varepsilon^{1/2}}{1-2\varepsilon^{1/2}} \right)^3 d^{\{1,2\}, \alpha}_{i,j} d^{\{1,3\}, \beta}_{i,k} d^{\{2,3\}, \gamma}_{j,k}  + \varepsilon_6 \right) |V^1_{i}| |V^2_{j}| |V^3_{k}|.
\end{gather*}
	If we started with the function $F$ so that $F(\ell) \ll (\frac{\varepsilon''_1}{\ell T (\varepsilon_3, \ell)} )^3$, 
	\begin{gather*}
		|S \setminus Z| \leq F(\ell) |S| \leq F(\ell) t_1 t_2 t_3 |V^1_i| |V^2_j| |V^3_k|  \leq \varepsilon''_1 |C'|.
	\end{gather*}
	
\noindent  As $C' \subseteq S$, if $\varepsilon''_1 = \varepsilon''_1(\varepsilon'_1) \ll \varepsilon'_1, \varepsilon_6 = \varepsilon_6(\varepsilon'_1, \ell, \varepsilon) \ll \varepsilon^{3/2} / \ell^3$ combining we get
	\begin{gather*}
		|C' \setminus Z| \geq (1-\varepsilon'_1) |C|.
	\end{gather*}
	
	As $C' \setminus Z$ is $E$-homogeneous, $\varepsilon'_1 = \varepsilon'_1(\varepsilon_1), \varepsilon (\varepsilon_1) \ll \varepsilon_1$, this implies that for any sub-cylinder intersection set $C_0$ of $C$,
	\begin{gather*}
		||E \cap C_0| - d |C_0|| \leq \varepsilon_1 d^{\{1,2\}, \alpha}_{i,j} d^{\{1,3\}, \beta}_{i,k} d^{\{2,3\}, \gamma}_{j,k} |V^1_{i}| |V^2_{j}| |V^3_{k}|,
	\end{gather*}
	where $d$ is the density of $E$ on $C$ (so close to $0$ or to $1$). Assuming $\varepsilon_6 \leq \varepsilon_2(\ell)$, it follows that the triad given by $C$ and $V^1_i, V^2_j, V^3_k$ satisfies (the partite version of) $\disc_{2,3}(\varepsilon_1, \varepsilon_2)$ (in the sense of \cite[Definition 2.7]{terry2021irregular}).

Finally, for fixed $i,j$, subdividing  and regrouping the sets $\{P^{e,\alpha}_{i,j} : \alpha \in [\ell_{e}]\setminus J^{e}_{i,j}\}$ (see the proof of \cite[Lemma 3.10]{frankl2002extremal}), we may assume that the partition $\{ P^{e,\alpha}_{i,j} : \alpha \leq \ell_e  \}$ of $V^u_i \times V^v_j$ is equitable (i.e.~$P^{e,\alpha}_{i,j}$ satisfy $\disc_2(\varepsilon_2, 1/\ell_e)$, as required in \cite[Definition 2.6]{terry2021irregular}). Note that all of the appropriate functions $\varepsilon_i$ and $F$ can be chosen starting with given $\varepsilon_1, \varepsilon_2$.
%
%
%
\end{proof}

\begin{remark}\label{rem: 2-stab reg ignore small cells}
	In the definition of strong $d$-stable regularity, we may additionally require that the partition satisfies: for every cylinder intersection $C = \bigcap_{e} S_{e,i_e}$ with $i_e >0$ for all $e$,  $\mu(C) \geq F(b) \prod_{e} \mu(S_{e,i_e})$; and $\mu(S_{e,i_e}) \geq F(b)$, where $b = \max_{e} b_e$.
\end{remark}
\begin{proof}
Let $\varepsilon'$ and $F': \mathbb{N} \to (0,1]$ be arbitrary.

	Start with a partition given by $2$-stable regularity for $\varepsilon \ll \varepsilon'$ and $F \ll F'$. We follow the notation in the proof of Proposition \ref{prop: str 2-stab reg implies disc}.
	First, for $e = \{u,v\}$, $u < v$, we replace $S_{e,0}$ by $S'_{e,0}$ obtained by adding to $S_{e,0}$ the union $\bigcup_{(i,j) \in \Gamma'_e} V_i \times V_j$ and $S_{e,i}$ for all $i$ with $\mu(S_{e,i}) \ll F(b) \ll \varepsilon/b$, then still $\mu(S'_{e,0}) \leq \varepsilon' = \varepsilon'(\varepsilon)$. 
			
Given $(i,j) \notin \Gamma'_e$, we let $(J')^{e}_{i,j}$ be the set of all $1 \leq \alpha \leq \ell_e$ with $d^{e,\alpha}_{i,j} <  F(\ell) / \ell$. For all $\alpha \in [\ell_e] \setminus  (J')^{e}_{i,j}$, we distribute $(Q')^{e}_{i,j} := \bigcup_{\alpha \in (J')^{e}_{i,j}} Q^{e, \alpha}_{i,j}$ (so we are not including $S_{e,0} \cap V_i \times V_j$ anymore like in the proof of Proposition \ref{prop: str 2-stab reg implies disc}) proportionally among  the sets $Q^{e,\alpha}_{i,j}$ with $\alpha \in [\ell_e] \setminus  (J')^{e}_{i,j}$ obtaining a new partition $\{(P')^{e,\alpha}_{i,j} : \alpha \in [\ell_e] \setminus  (J')^{e}_{i,j}\}$ of $(V_i \times V_j) \setminus S'_{e,0}$ so that each $(P')^{\varepsilon, \alpha}_{i,j}$ still satisfies $\disc_2(\varepsilon_5)$ (or we could just hide it in a single set $Q^{e,\alpha}_{i,j}$ of maximal measure $\geq 1/\ell$ this time). We also let $(P')^{e,\alpha}_{i,j} := \emptyset$ for $\alpha \in  (J')^{e}_{i,j}$.
For each $1 \leq \alpha \leq b_e$, we let $(S')_{e, \alpha} := \bigcup_{(i,j) \notin \Gamma'_e} (P')^{\alpha}_{i,j}$. It follows that for all $1 \leq \alpha \leq b_e$ and all $(i,j)$, the density of $(S')_{e, \alpha}$ on $V_i \times V_j$ is either exactly $0$, or at least $F(\ell)/\ell$.

	Given any $\alpha, \beta, \gamma > 0$, we then have  $C := (S')_{\{1,2\},\alpha} \cap (S')_{\{1,3\},\beta} \cap (S')_{\{2,3\},\gamma}$ is the union of pairwise disjoint sets  $C_{i,j,k} := (P')^{\{1,2\},\alpha}_{i,j} \cap (P')^{\{1,3\},\beta}_{i,k} \cap (P')^{\{2,3\},\gamma}_{j,k}$ over all triples $(i,j,k)$ with each pair not in the corresponding $\Gamma'$. Using quasi-randomness of the $P'$'s, by the counting lemma for each such $(i,j,k)$  the set $C_{i,j,k}$ has density exactly $0$ or at least $(F(\ell) / \ell)^3 - \varepsilon_6$ on $V_i \times V_j \times V_k$, for $\varepsilon_6(\ell) \ll (F(\ell) / \ell)^3$. Hence, summing over all $(i,j,k)$,  $\mu(C)$ is at least $F'(\ell) \mu((S')_{\{1,2\},\alpha}) \mu((S')_{\{1,3\},\beta}) \mu((S')_{\{2,3\},\gamma})$ assuming $F$ was sufficiently small, and similarly to the proof of Proposition \ref{prop: str 2-stab reg implies disc} we still have that $E$ is homogeneous on $C$ outside of a set of measure $F'(\ell)|C|$ (this uses the modified definition of $(J')^{e}_{i,j}$ to get a fraction proportional to $F'(\ell)$ rather than $\varepsilon'$, taking advantage that $S_{e,0}$ was not involved in the repartitioning).
\end{proof}

\begin{remark}
It follows from Theorem \ref{thm:main} (combined with Proposition \ref{prop: str 2-stab reg implies disc} and \cite[Theorem 2.55]{terry2021irregular}) that the $(k+1)$-ary functions considered in Theorem \ref{thm:main} with $k+1$ and $d = k$, viewed as relations in continuous logic,  are NFOP$_k$ (and NOP$_2$ in the sense of Takeuchi \cite{takeuchi} for $k=3, d=2$). We note that these relations are  not slice-wise NIP in the sense of \cite{chernikov2020hypergraph}, so in particular not slice-wise stable (in the sense of \cite{chernikov2024perfect}) and not partition-wise NIP (in the sense of \cite{chernikov2021definable}). 

Indeed, consider random graphs on $XY$, $XZ$, $YZ$; let $\Omega$ consist of four points, ${a,b,c,d}$, of equal measure. We define $P_{x,y}$ to be $\{a,b\}$ if $(x,y)$ is an edge in the graph on $XY$ and to be $\{c,d\}$ otherwise; we define $Q_{x,z}$ to be $\{a,c\}$ if $(x,y)$ is in the graph on $XZ$ and $\{b,d\}$ otherwise; we define $R_{y,z}$ to be $\{a,d\}$ if $(y,z)$ is in the graph on $YZ$ and $\{b,c\}$ otherwise. If $(x,y,z)$ is a triple with all three pairs in the corresponding graph, $P_{x,y}\cap Q_{x,z}\cap R_{y,z}=\{a\}$; if exactly one edge is present, $P_{x,y}\cap Q_{x,z}\cap R_{y,z}$ is a singleton $\{b\}$, $\{c\}$, or $\{d\}$, depending on which edge. If $0$ or $2$ edges are present, $P_{x,y}\cap Q_{x,z}\cap R_{y,z}=\emptyset$. So the corresponding (continuous logic) formula $\mu(P_{x,y}\cap Q_{x,z}\cap R_{y,z})$ recovers the set of triples with an odd number of edges present. \end{remark}

\subsection{No single excluded substructure}\label{sec: half-simp and GS}

\begin{definition}\label{def: half-simp and GS}
\begin{enumerate}
	\item  The \emph{half-simplex} is the $3$-partite $3$-hypergraph where each part is a copy of $\mathbb{Q}\cap[0,1]$ and $(x,y,z)$ is an edge exactly when $x+y+z\geq 1$. (This defines the set of points outside of the standard simplex in $\mathbb{R}^{3}$. We could instead consider $x+y+z\leq 1$ to define the points inside the simplex --- this does not affect Theorem \ref{thm: embeds into GS3 and simplex}.)
	\item  For an ordinal $\kappa \in \omega \cup \{\omega\}$ and a fixed prime $p$, $\GS_p(\kappa)$ is the $3$-partite $3$-hypergraph where each part is a copy of $\mathbb{F}_{p}^{\kappa}$ and $(x,y,z)$ is a  hyper-edge exactly when, for the least $n < \kappa$ such that $x(n)+y(n)+z(n)\neq 0$ (calculated in $\mathbb{F}_p$) we have $x(n)+y(n)+z(n) = 1$ (and not an edge if there is no such $n$). We will simply write $\GS_p$ for $\GS_p(\omega)$. Note that for any $\kappa < \lambda \in \omega \cup \{\omega\}$, $\GS_p(\kappa)$ is isomorphic to an induced sub-hypergraph of $\GS_p(\lambda)$ (e.g.~via an embedding sending $x \in \mathbb{F}_p^{\kappa}$ to $x'$ with $x'(i) = x(i)$ for $i < \kappa$ and $x'(i) = 0$ for $i \geq \kappa$).
\end{enumerate}
\end{definition}
Terry and Wolf showed both of the families of finite induced sub-hypergraphs of the half-simplex (\cite[Theorem 2.55]{terry2021irregular}, referred to as HOP$_2$ there) and of $\GS_p$ for $p\geq 3$ (\cite[Theorem 2.58]{terry2021irregular}) fail to have  binary $\disc_{2,3}$ error, hence fail to have strong $2$-stability by Proposition \ref{prop: str 2-stab reg implies disc}.

Also, the half-simplex does not embed into $\GS_3(n)$ as an induced sub-hypergraph (as $\GS_3(n)$ does not have $4$-HOP$_2$ \cite[Appendix A.3]{terry2021higher}). Conversely, $\GS_3(2)$ does not embed into the half-simplex (as an induced sub-hypergraph):

\begin{definition}\label{def: monotone}
	We say that a hypergraph $\mathcal{H} = (E; X^1, X^2, X^3)$ is \emph{monotone} if there exist linear orders $<^u$ on $X^u$  for each $u \in [3]$ so that: if $x<^1 x' \in X^1$ and $y \in X^2, z \in X^3$ are arbitrary so that $(x,y,z)\in E$, then $(x',y,z)\in E$; and similarly for $y$ with respect to $<^2$ and for $z$ with respect to $<^3$.
\end{definition}

\begin{remark}
\begin{enumerate}
	\item The half-simplex is monotone, witnessed by each $<^u$ the standard linear order on $\mathbb{Q}$. 
	\item Every induced sub-hypergraph of a monotone hypergraph is monotone. In particular, every hypergraph that embeds into the half-simplex is monotone.
	\item  $\GS_3(2)$ is not monotone (as explained in the proof of Claim \ref{cla: one extra coordinate fixed}(3a), since each part $X^u$ contains sequences $\langle 00 \rangle , \langle 01 \rangle, \langle 02 \rangle$, so $\langle 0 \rangle^u$ splits in three directions for each $u \in [3]$, and $\langle 0 \rangle = I(\langle 0 \rangle, \langle 0 \rangle)$), hence $\GS_3(\kappa)$ does not embed into the half-simplex for any $\kappa \geq 2$.
\end{enumerate}
\end{remark}

 We now demonstrate  that any common induced sub-$3$-hypergraph of these two hypergraphs  \emph{does} have strong $2$-stability.  This shows, in particular, that strong $2$-stability cannot be characterized by a single family of excluded $3$-hypergraphs, partially addressing \cite[Conjecture 2.62]{terry2021irregular}.

\begin{theorem}\label{thm: embeds into GS3 and simplex}
Any sequence  $\mathcal{H}_i = (E_i; X^1_{i}, X^2_{i}, X^3_{i})$ for $i \in \mathbb{N}$ of finite 3-partite 3-hypergraphs which embed as induced $3$-partite sub-$3$-hypergraphs into both $\GS_3$ and into the half-simplex, satisfies perfect $2$-stable regularity.

\end{theorem}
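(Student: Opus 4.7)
The plan is to pass to the ultraproduct $\mathcal{M}^{\ast}$ of the sequence $(\mathcal{H}_i)$, obtaining base sets $X^1, X^2, X^3$, an edge relation $E \in \mathcal{B}_{[3]}$, monotonicity orders $<^u$ on each $X^u$ coming from the half-simplex embedding (using that every induced sub-hypergraph of a monotone hypergraph is monotone), and internal embeddings $\phi_u \colon X^u \hookrightarrow \mathbb{F}_3^{\omega^{\ast}}$ induced by the $\GS_3$-embeddings. Let $\pi_u(x) \in \mathbb{F}_3^{\omega}$ denote the restriction of $\phi_u(x)$ to the standard coordinates. The goal is to exhibit, for every $\varepsilon > 0$, countable $\mathcal{B}_e$-measurable partitions of the pairs $X^u \times X^v$ on which the density of $E$ on every cylinder intersection is either $0$ or $1$ almost everywhere.

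For each standard $n \in \mathbb{N}$ and $\sigma \in \mathbb{F}_3^n$, set $A^u_n(\sigma) := \{x \in X^u : \pi_u(x) \upharpoonright n = \sigma\}$ and consider the induced finite partition of pairs $X^u \times X^v$ by rectangles $A^u_n(\sigma) \times A^v_n(\tau)$. A non-empty cylinder intersection corresponds to a triple $(\sigma,\tau,\rho) \in (\mathbb{F}_3^n)^3$, and whenever some $i < n$ satisfies $\sigma(i) + \tau(i) + \rho(i) \not\equiv 0 \pmod 3$, the smallest such $i$ determines $\chi_E$ on the cylinder by the definition of $\GS_3$, giving density exactly $0$ or $1$. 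Refining from level $n$ to $n+1$ splits each remaining ``diagonal'' cylinder (those with $\sigma(i) + \tau(i) + \rho(i) \equiv 0$ for all $i < n$) into $27$ children, of which $18$ are newly resolved and $9$ remain diagonal. Iterating yields a countable stopping-time family of resolved pair-cylinders covering the set $\{(x,y,z) : \pi_1(x) + \pi_2(y) + \pi_3(z) \neq 0 \text{ in } \mathbb{F}_3^\omega\}$; the residual set is $D := \{(x,y,z) : \pi_1(x) + \pi_2(y) + \pi_3(z) = 0 \text{ in } \mathbb{F}_3^\omega\}$, on which $\chi_E$ is controlled by the non-standard tails of the $\phi_u$'s.

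The crux is to decompose the restriction of $\chi_E$ to $D$ via a countable pair-partition with density $0/1$ on every cylinder, which is where monotonicity enters. The slice $\{z : (x,y,z) \in E\}$ is $<^3$-upward closed, so it is determined by a threshold $\theta^3(x,y) \in X^3 \cup \{\pm \infty\}$, and symmetrically for the other two sides. The key combinatorial input is the non-embeddability of $\GS_3(2)$ into any monotone $3$-hypergraph (noted just before the statement of the theorem): three simultaneous non-trivial branches at a common diagonal node of the joint $\phi_u$-tree inside $D$ would reproduce a monotone copy of $\GS_3(2)$, which is forbidden. Applied iteratively down the tree of diagonal nodes, this tree-pruning argument should force the threshold functions $\theta^u$ to be locally constant on a countable $<^u$-refinement of each $X^u$; taking a common refinement of the three resulting pair-partitions and assembling with the stopping-time family from the previous paragraph produces the desired countable partition. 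The main technical difficulty I anticipate is this tree-pruning step: converting the finitary ``no monotone induced copy of $\GS_3(2)$'' obstruction into an ultrafilter-robust statement about the non-standard branching of $\phi_u$ within $D$, and verifying that the ensuing $<^u$-refinement is countable rather than of continuum cardinality.
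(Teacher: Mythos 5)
Your opening moves match the paper's: pass to the ultraproduct, keep the tree structure on $\mathbb{F}_3$-sequences from the $\GS_3$-embedding and the orders $<^u$ from the half-simplex embedding, and exploit the fact that a three-way split is incompatible with monotonicity. But there are two substantive gaps. First, you run the tree/stopping-time analysis only over \emph{standard} coordinates. In the ultraproduct the index set is $\mathbb{N}^{\ast}$, and the least level at which $x(i)+y(i)+z(i)\neq 0$ is typically non-standard; your residual set $D$ (sum vanishing on all standard coordinates) can easily carry all of the measure, so essentially the whole problem is deferred to the part of your argument you describe only as a hope. The paper's machinery lives precisely at non-standard levels: the rectangles $R$ and $sR$ are indexed by prefixes of non-standard length, and the function $i(x)$ (first level where the prefix measure drops to $0$) takes values in $\mathbb{N}^{\ast}$ or its Dedekind completion, with separate treatment of successor, limit, and atom cases. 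Your plan for $D$ via threshold functions $\theta^u$ would need to show these thresholds are constant on a \emph{countable} measurable refinement, and the paper gets countability not from monotonicity alone but from positive measure (disjoint positive-measure rectangles are countable in number) together with a finite combinatorial classification of the measure-zero leftovers (the $C$, $sC$, $L$ sets); nothing in your sketch substitutes for that.

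Second, your ``tree-pruning'' step misreads what the non-monotonicity of $\GS_3(2)$ forbids. It rules out a \emph{single} prefix $\sigma^u$ splitting in three (order-separated, positive-measure) directions; it does \emph{not} rule out all three prefixes $\sigma,\tau,I(\sigma,\tau)$ splitting simultaneously. Simultaneous splitting genuinely occurs and cannot be pruned away --- it is the reason the paper introduces the $sR$ and $sC$ sets. What monotonicity buys in that case (the paper's Claim on ``one extra coordinate fixed'', part (2)) is that each prefix splits in exactly two directions, the two directions are separated by $<^u$, and for any choice of branches summing to $0$ there is a coordinatewise-monotone perturbation of the branch choices whose sum is $1$ (or $2$), which together with monotonicity of $E$ decides membership on the whole product of branches. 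Your argument, as written, would conclude that the diagonal tree eventually stops branching, which is false; so the decomposition of $D$ would not go through without importing essentially the paper's case analysis.
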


\begin{proof}
	Let $\mathcal{H}_i = (E_i; X^1_{i}, X^2_{i}, X^3_{i})$ for $i \in \mathbb{N}$ be a sequence of finite 3-partite 3-hypergraphs which embed as induced $3$-partite sub-$3$-hypergraphs into both $\GS_3$ and into the half-simplex, and let $\mu^u_{i}$ be a probability measure on $X^u_{i}$ for $u \in [3]$. Identifying via the embeddings into $\GS_3$, we may assume $X^u_i \subseteq \mathbb{F}_3^{\mathbb{N}}$ and $E$ is the restriction of the edge relation in $\GS_3$ to $X^1_i\times X^2_i\times X^3_i$. And the embeddings  into the half-simplex induce from $(\mathbb{Q}, <)$ an ordering $<^u_i$ on each $X^u_i$ so that $E_i$ is \emph{monotone with respect to $<_i^u$} for each $u \in [3]$: if $x<_i^1 x' \in X^1_i$ and $(x,y,z)\in E_i$ then $(x',y,z)\in E_i$; and similarly for $y$ with respect to $<^2_i$ and for $z$ with respect to $<^3_i$.
	
	Let $\mathcal{U}$ be a non-principal ultrafilter on $\mathbb{N}$, and consider the infinite $3$-partite $3$-hypergraph $\mathcal{H} = (E; X^1, X^2, X^3) := \prod_{i \in \mathbb{N}} \mathcal{H}_i / \mathcal{U}$. We can identify $X^u$ with a subset of $ \mathbb{F}_3^{\mathbb{N}^{\ast}}$, where $\mathbb{N}^{\ast}$ is an ultrapower of $\mathbb{N}$, so that for any $(x^u \in X^{u} : u \in [3])$, either $x^1(n) + x^2(n) + x^3(n) =  0$ for all $n \in \mathbb{N}^{\ast}$, or there is a smallest such $n$ such that $x^1(n) + x^2(n) + x^3(n) >  0$; and $E(x^1,x^2, x^3)$ holds if and only if $x^1(n) + x^2(n) + x^3(n) = 1$ for the smallest such $n$. Note that $\mathbb{N}^{\ast}$ is not well-founded, but this holds by \L os' theorem.

		Formally, we can present $GS_3$ as a three-sorted first-order structure $\mathcal{M} = (Y, Z, \mathbb{F}_3; E, e)$ where $Y = 3^\mathbb{N}$ with no additional structure, $\mathbb{N}$ is equipped with the ordered ring structure, $\mathbb{F}_3$ with the field structure, function $e: (x, n) \in 3^{\mathbb{N}} \times \mathbb{N} \mapsto x(i) \in \mathbb{F}_3$ and $E \subseteq Y^3$ is definable via 
		\begin{gather*}
			F(x,y,z) \iff \\
			\exists n \in Z (\forall i <n, e(x,i) + e(y,i) + e(z,i) = 0 \land  e(x,n) + e(y,n) + e(z,n) = 1).
		\end{gather*}
		Then $\mathcal{H}_i$ is isomorphic to $(E^{\mathcal{M}} \upharpoonright X^1_i \times X^2_i \times X^3_i; X^1_i, X^2_i, X^3_i )$ for some $X^u_i \subset Y$, let $\mathcal{M}_i$ be the expansion of $\mathcal{M}$ by adding predicates for $X^u_i$.
		Taking the corresponding ultraproduct $\mathcal{M}^{\ast} := \prod_{i \in \mathbb{N}} \mathcal{M}_i / {\mathcal{U}}$, 		$\mathcal{M}^{\ast} = (Y^{\ast}, \mathbb{N}^{\ast}, \mathbb{F}_p; E^{\ast}, e^{\ast})$ we have that $\mathcal{H}$ is isomorphic to  $(E^{\ast} \upharpoonright X^1 \times X^2 \times X^3; X^1, X^2, X^3 )$ for internal sets $X^u = \prod_{i \in \mathbb{N}} X^u_i / \mathcal{U}$, and by \L os theorem, for every $x \in Y^{\ast}$, $e^{\ast}(x,-)$ is a function from $Z^{\ast} = \mathbb{N}^{\ast}$ to $\mathbb{F}_p$, and $E^{\ast}$ is defined in the same way as above. We will follow the notation for measures on ultraproducts from Section \ref{sec: ultraprod and perf reg}, so for $e \subseteq [3]$, $\mu^e$ is the ultralimit of the measures $\mu^e_i$ (and we will typically omit $e$ when it is clear from the context).

	Letting $<^{u} := \prod_{i \in \mathbb{N}} <^u_i / \mathcal{U}$, $<^u$ is a linear order on $X^u$ and $E$ is monotone with respect to $<^u$ (again, by \L os' theorem). Given two triples $(x^u : u \in [3]), (y^u : u \in [3])$ with $x^u, y^u \in X^u$, we will write $(x^1, x^2, x^3) \leq (y^1, y^2, y^3)$ if $x^u \leq^u y^u$ for all $u \in [3]$, and ``$<$'' when at least one of the inequalities is strict.

%
%
%
%
%
%
%
%
%

Given $n \in \mathbb{N}^{\ast}$, we write $\mathbb{F}_3^{n}$ for the set of functions $\{n' \in \mathbb{N}^{\ast} : 0 \leq n' < n\} \to \mathbb{F}_3$ (which we also think about as sequences of $\{0,1,2\}$). Given two sequences $\sigma,\tau \in \mathbb{F}_3^{ b}$ for $b \in \mathbb{N}^{\ast} \cup \{\mathbb{N}^{\ast}\}$, we define 
 $I(\sigma,\tau) \in \mathbb{F}_3^{ b}$ to be the sequence with $I(\sigma,\tau)(i)=-(\sigma(i)+\tau(i))$ for all $i$. In particular: $\sigma(i)+\tau(i)+I(\sigma,\tau)(i)=0$ for all $i$; and for any $\sigma, \tau, \upsilon$ we have $\upsilon = I(\sigma, \tau)$ if and only if $\sigma = I(\upsilon, \tau)$, if and only if $\tau = I(\sigma, \upsilon)$ (note that if $\{u,v,w\} = \{1,2,3\}$ and $x \in X^u, y \in X^v$, it is not necessary that $I(x,y) \in X^w$). 
For $b \in \mathbb{N}^{\ast}$, $|\sigma| := b$ denotes the (non-standard) length of $\sigma$. If $\sigma \in \mathbb{F}_3^{ b}, \tau \in \mathbb{F}_3^{ b'}$, we write $ \sigma \sqsubseteq \tau$ if $b \leq b'$ and $\sigma(i) = \tau(i)$ for all $i < b$ in $\mathbb{N}^{\ast}$ (and $ \sigma \sqsubset \tau$ if also $b < b'$). Given $\sigma \in \mathbb{F}_3^{ b}$, we write $[\sigma]$ for  the set $\{\tau \in \mathbb{F}_3^{\mathbb{N}^{\ast}} : \sigma \sqsubseteq \tau  \}$ of all sequences with prefix $\sigma$. If $\sigma \in \mathbb{F}_3^{ b}$ and $b' \leq b$, we write $\sigma \upharpoonright {b'}$ for the sequence $(\sigma(i) : i < b') \in \mathbb{F}_3^{ b'}$. We write $\sigma \perp \tau$ if neither $\sigma \sqsubseteq \tau$ nor $ \tau \sqsubseteq \sigma$.

We often write $\sigma^u$ to indicate that we are thinking of $\sigma$ as a prefix of an element of $X^u$. We say $\sigma^u$ is \emph{present} if $[\sigma^u]\cap X^u\neq\emptyset$.  For two sequences $\sigma \in \mathbb{F}_3^{b},  \tau \in \mathbb{F}_3^{b'} $ with $b,b' \in \mathbb{N}^{\ast} \cup \{\mathbb{N}^{\ast}\}$, we denote by $\sigma \cap \tau$ their maximal common initial segment. If both $\sigma, \tau$ are present, then $\sigma \cap \tau \in \mathbb{F}_3^{b''}$
 for some $b'' \in \mathbb{N}^{\ast}$ (by \L os' theorem).

We say $\sigma^u\in \mathbb{F}_3^{b}$ with $b \in \mathbb{N}^{\ast}$ is \emph{non-splitting} if $\sigma^u$ is present and there is a unique $j \in \mathbb{F}_{3}$ so that $[\sigma^u]\cap X^u\subseteq [\sigma^u{}^\frown\langle j\rangle]$. We call $j$ the \emph{unique extension} of $\sigma^u$. We say that $\sigma^u$ is a \emph{large split} if there are $j \neq j' \in \{0,1,2\}$ so that both $\mu([\sigma^u {}^\frown j]) >0$  and $\mu([\sigma^u {}^\frown j']) >0$. 

\begin{claim}\label{cla: ctbly many large splits}
There are at most countably many large splits.	
\end{claim}
\begin{claimproof}
	For each large split, consider the measure of the smallest of the
positive measure results of that split. If there were uncountably many large
splits, there would have to be some $\varepsilon > 0$ so that uncountably many of
these splits had that smallest result be at least $\varepsilon$. But there can only be finitely many of these (the subtree of these splits has both chains and antichains bounded in size by $1/\varepsilon$).
\end{claimproof}

\begin{definition}
	We define $Z^u$ to be those $x^u \in X^u$ for which there is some $v \in \{1,2,3\}$ (so $v$ could be either equal or not to $u$) and a large split $\sigma^v$ so that $\mu([x^u \upharpoonright |\sigma^v|]) > 0$ and $\mu([x^u \upharpoonright (|\sigma^v|+1)]) = 0$. Let $Z^{u,v} := (Z^u \times X^v) \cup (X^u \times Z^v)$.
\end{definition}
That is, $Z^u$ is those sequences where, faced with a split into two positive measure sets on a given level, they instead went a third, measure $0$ way. 
\begin{claim}\label{cla: Z meas 0}
$Z^{u,v} \in \mathcal{B}_{\{u,v\}}$ and $\mu(Z^{u,v}) = 0$.	
\end{claim}
\begin{proof}
	By Claim \ref{cla: ctbly many large splits}, the length $|\sigma^v|$ for $v \in  [3]$ and $\sigma^v$ a large split can take  at most countably many values in $\mathbb{N}^{\ast}$. Once $|\sigma^v|$ is fixed, there are at most countably many $\tau^u$ with $|\tau^u| = |\sigma^v|$ and $\mu([\tau^u]) > 0$ (as they are pairwise disjoint), and for each there are at most $3$ choices of $j \in \{0,1,2\}$ so that $[\tau^u {}^{\frown} j]$ has measure $0$. This shows that $Z^u$ is a union of countably many measure $0$ sets.
\end{proof}

The following is a crucial property of $E$ inherited from the embedding into the half-simplex:
\begin{claim}\label{cla: one extra coordinate fixed}
 Assume $n \in \mathbb{N}^{\ast}$, $\sigma_u \in \mathbb{F}_3^{n}$ are present (in the corresponding $X^u$) for $u \in [3]$ and $\sigma_3 = I(\sigma_1, \sigma_2)$. Then exactly one of the following happens:
 \begin{enumerate}
 	\item at least one of $\sigma_u$ is non-splitting (when  $I(\sigma_1,\sigma_2)$ is non-splitting, we write $j(\sigma_1,\sigma_2)$ for the unique extension of $I(\sigma_1,\sigma_2)$);
 	\item each $\sigma_u$ splits --- then:
\begin{enumerate}
	\item each $\sigma_u$ splits in exactly two directions, i.e.~there exist $j_1^u \neq j_2^u \in \{0,1,2 \}$ so that $[\sigma_u] \subseteq [\sigma_u {}^\frown j^u_1] \sqcup [\sigma_u {}^\frown j^u_2]$ and both $\sigma_u {}^\frown j^u_1, \sigma_u {}^\frown j^u_2$ are present in $X^u$;
	\item we have $x^u_1 <^u x^u_2$ for all $x^u_t \in [\sigma_u {}^\frown j^u_t], t \in [2]$;
	\item for any $(t_u : u \in [3]) \in [2]^3$, if $\sum_{u \in [3]} j^u_{t_u} = 0$ then there is some $(t'_u : u \in [3]) \in [2]^3$ so that
	\begin{itemize}
	\item either  $t'_u \leq  t_u$ for all $u$ and $\sum_{u \in [3]} j^u_{t_u} = 1$ (hence for all $x^u \in [\sigma_u {}^\frown j^u_{t_u}] \cap X^u$ we have $(x^1, x^2, x^3) \in E$  by (b) monotonicity of $E$),
	\item or $t_u \leq  t'_u$ for all $u$ and $\sum_{u \in [3]} j^u_{t_u} = 2$ (hence for all $x^u \in [\sigma_u {}^\frown j^u_{t_u}] \cap X^u$ we have $(x^1, x^2, x^3) \notin E$  by (b) monotonicity of $E$).
	\end{itemize}  
\end{enumerate}
	
	In this case we will write $j^w_t(\sigma^u_1, \sigma^v_2)$ for $j^w_t$ ($w \in [3], t \in [2]$).
 \end{enumerate}
 
  \end{claim}
  \begin{claimproof}
%
%
%

Assume each of $\sigma_u$ splits in $X^u$ for $u \in [3]$. Then there exist some $x^u_1 <^u x^u_2$ in $[\sigma_u] \cap X^u$ so that $x^u_1(n) \neq x^u_2(n)$ for all $u \in [3]$.

For (a), assume first that one of them, say $\sigma_3$, splits in all three directions. So we have in fact $x^3_1 <^3 x^3_2 <^3 x^3_3 \in [\sigma_3] \cap X^3$ with $\{x^3_t(n) : t \in [3] \} = \{0,1,2\}$. We also have $\{x^1_{t_1}(n) + x^2_{t_2}(n) : t_1, t_2 \in [2]  \} =  \{0,1,2\}$.  It follows that there exist some $t_1, t_2 \in [2]$ and  $t_3 < t'_3$ in $[3]$ so that $x^1_{t_1}(n) + x^2_{t_2}(n) + x^3_{t_3}(n) = 1$ and $x^1_{t_1}(n) + x^2_{t_2}(n) + x^3_{t'_3}(n) = 2$. Then, by definition of $E$, $(x^1_{t_1}, x^2_{t_2}, x^{3}_{t_3}) \in E$, $(x^1_{t_1}, x^2_{t_2}, x^{3}_{t'_3}) \notin E$, but also $x^{3}_{t_3} <^3 x^{3}_{t'_3}$ --- contradicting monotonicity of $E$.

For (b), assume that there exist also some $y_1 <^3 y_2$ in $[\sigma_3] \cap X^3$ so that $x^3_1(n) = y_2(n)$ and $x^3_2(n) = y_1(n)$. If the ordered pair $(x^3_1(n), x^3_2(n))$ is one of $(0,1), (1,2), (2,0)$, there is still some $b \in \{0,1,2\}$ so that $b + x^3_1(n) = 1, b+  x^3_2(n) =2$, so we get a contradiction to monotonicity of $E$ as above. But if $(x^3_1(n), x^3_2(n))$ is not one of these pairs, then necessarily $(y_1(n), y_2(n))$ is one of these pairs, so we can use $(y_1(n), y_2(n))$ in place of $(x^3_1(n), x^3_2(n))$  to contradict monotonicity.

For (c), assume that  $\bar{t}=(t_u : u \in [3])$ is such that $\sum_{u \in [3]} j^u_{t_u} = 0$. If at least two of the $t_u$'s are $=1$, say $\bar{t} = (1,1,2)$, we use that $\{j^1_{t_1} + j^2_{t_2} : t_1, t_2 \in [2]  \} =  \{0,1,2\}$ to find $t_1, t_2$ so that $j^1_{t_1} + j^2_{t_2} + 2 = 2$ (so automatically $(t_1, t_2, 2)$ is above $\bar{t}$).  Otherwise at least two of the $t_u$'s are $=2$, say $\bar{t} = (2,1,2)$, and we use that $\{j^1_{t_1} + j^3_{t_3} : t_1, t_3 \in [2]  \} =  \{0,1,2\}$ to find $t_1, t_3$ so that $j^1_{t_1} +  2 + j^3_{t_3} = 2$ (and automatically $(t_1, 1, t_3)$ is below $\bar{t}$).
  \end{claimproof}




 First we try to exhaust by pairwise disjoint rectangles of positive measure which decide $E$.

The first collection of sets we identify are those rectangles $[\sigma] \times [\tau]$ of positive measure where all three of $\sigma, \tau, I(\sigma,\tau)$ split simultaneously:
\begin{definition}\label{def: sR sets def}
 Given $u \neq v \in [3]$ and  $\sigma^u, \tau^v \in \mathbb{F}_3^{n}$ for $n \in \mathbb{N}^{\ast}$, suppose that 
\begin{enumerate}
\item $|\sigma^u|=|\tau^v| = n$,
\item $\mu([\sigma^u])>0$, $\mu([\tau^v])>0$, and $\mu([I(\sigma^u,\tau^v) \upharpoonright (n-1) ]  )>0$,
\item each of $ \sigma^u\upharpoonright (n-1), \tau^u\upharpoonright |(n-1), I(\sigma^u,\tau^v)\upharpoonright (n-1)$ splits.
\end{enumerate}
Then we set $sR^{u,v}_{\sigma,\tau} := [\sigma]\times[\tau]$ (``$sR$''  for ``splitting rectangle''; note that $sR^{u,v}_{\sigma,\tau} \in \mathcal{B}_{\{u,v\}}$ and $\mu(sR^{u,v}_{\sigma,\tau}) > 0$.)
\end{definition} 
Note that these are symmetric under permuting $u,v$.

The second collection of sets we identify are those where there is a single rectangle which simultaneously gets resolved into or out of $E$:
\begin{definition}\label{def: R sets def}
	For $u \neq v \in [3]$ and $\sigma^u, \tau^v \in \mathbb{F}_3^{n}, n \in \mathbb{N}^{\ast}$, suppose that
\begin{enumerate}
\item $|\sigma^u|=|\tau^v| = n$,
\item $\mu([\sigma^u])>0$, $\mu([\tau^v])>0$, and $\mu([I(\sigma^u,\tau^v) \upharpoonright (n-1) ]  )>0$,
\item $I(\sigma^u,\tau^v)\upharpoonright n-1$ is non-splitting,
\item $\sigma(n-1)+\tau(n-1)+j(\sigma^u\upharpoonright (n-1),\tau^v\upharpoonright (n-1))=j_0 > 0$.
\end{enumerate}
 Then we set $R^{u,v}_{\sigma,\tau} := [\sigma]\times[\tau]$. ($R$ stands for (non-splitting) ``rectangle''; note that $R^{u,v}_{\sigma,\tau} \in \mathcal{B}_{\{u,v\}}$ and  $\mu(R^{u,v}_{\sigma,\tau}) > 0$.)
\end{definition}

\begin{definition}
Fix $u < v \in [3]$. First we add $sR$ sets to $\mathcal{R}^{u,v}$ by induction on length $n \in \mathbb{N}^{\ast}$.  Given $n$, if the pair $(\sigma^u, \tau^v)$ with $\sigma,\tau \in \mathbb{F}_3^{n}$ satisfies Definition \ref{def: sR sets def} and $sR^{u,v}_{\sigma, \tau}$ is not contained in any of the previously selected sets $sR^{u,v}_{\sigma',\tau'} $ with $|\sigma'|=|\tau'| < n$, we add $sR^{u,v}_{\sigma, \tau}$ to $\mathcal{R}^{u,v}$.

 After this, we add to $\mathcal{R}^{u,v}$ all sets of the form $R^{u,v}_{\sigma,\tau}$  not contained in any of the previously added sets $sR^{u,v}_{\sigma',\tau'}$.
\end{definition}

\begin{claim}\label{cla: R sets disjoint and countable}
For fixed $u \neq v \in [3]$, all $R$-sets are pairwise disjoint, so there are at most countably many of them (as all of them have positive measure).

And all  sets in $\mathcal{R}^{u,v}$ are pairwise disjoint.  In particular there are at most countably many sets in $\mathcal{R}^{u,v}$ (as all of them have positive measure).

And every set of the form $R^{u,v}_{\sigma,\tau}$ or $sR^{u,v}_{\sigma,\tau}$ is contained in some set in $\mathcal{R}^{u,v}$.
\end{claim}
\begin{proof}

Note that for any $(\sigma_1, \tau_1), (\sigma_2, \tau_2)$ with $|\sigma_i| = |\tau_i|$ for both $i \in [2]$, if $([\sigma_1]\times[\tau_1])\cap([\sigma_2]\times[\tau_2]) \neq \emptyset$, then for some $i \in [2]$ we have both $\sigma_i \sqsubseteq \sigma_{3-i}$ and $\tau_i \sqsubseteq \tau_{3-i}$ (i.e.~one of the rectangles is contained in the other one). Hence, from the way they were chosen, all of the $sR$ sets in $\mathcal{R}$ are pairwise-disjoint, and every $R$ set in $\mathcal{R}$ is disjoint from every $sR$ set in $\mathcal{R}$. Suppose $(\sigma_1^u, \tau_1^v) \neq (\sigma_2^u, \tau_2^v)$	are as in Definition \ref{def: R sets def} and $([\sigma_1]\times[\tau_1])\cap([\sigma_2]\times[\tau_2])\neq\emptyset$. Then, without loss of generality, $\sigma_1\sqsubseteq\sigma_2$ and $\tau_1 \sqsubseteq \tau_2$, so $|\sigma_2| > |\sigma_1|$. But then $[I(\sigma_1,\tau_1)]\supseteq [I(\sigma_2,\tau_2)\upharpoonright (|\sigma_2|-1)]$. The latter set is non-empty by Definition \ref{def: R sets def}(2) for $(\sigma_2, \tau_2)$, while the former has to be empty by Definition \ref{def: R sets def}(3,4) for $(\sigma_1, \tau_1)$. This contradiction shows that $R^{u,v}_{\sigma_1, \tau_1} \cap R^{u,v}_{\sigma_2, \tau_2} = \emptyset$.
\end{proof}


We have related  measure $0$ sets ``adjacent'' to $R$-sets, and measure $0$ extensions determined by $sR$ sets:
\begin{definition}\label{def: RZ sets}
	Assume $\{u,v,w\} = \{1,2,3\}$.
	
	 For each $R^{u,v}_{\sigma,\tau}$ and any pair $\sigma',\tau'$ such that $|\sigma'|=|\tau'|=|\sigma|$, $\sigma'\upharpoonright (|\sigma|-1) =\sigma\upharpoonright (|\sigma|-1)$ and $\tau'\upharpoonright (|\sigma|-1)=\tau\upharpoonright (|\sigma|-1)$, if either $\mu([\sigma'])=0$ or $\mu([\tau'])=0$ (or both), 
we define a set $RZ^{u,v}_{\sigma',\tau'} := [\sigma'] \times [\tau']$ of measure $0$.

For each $sR^{u,v}_{\sigma, \tau} \in \mathcal{R}$ and any $(\sigma')^u, (\tau')^{v}, \rho^w$ with $|(\sigma')^u| = |(\tau')^{v}| = |\rho^w| = |\sigma|$ and $\sigma'\upharpoonright (|\sigma|-1) =\sigma\upharpoonright (|\sigma|-1)$, $\tau'\upharpoonright (|\sigma|-1)=\tau\upharpoonright (|\sigma|-1)$, $\rho \upharpoonright (|\sigma|-1)=I(\tau, \sigma) \upharpoonright (|\sigma|-1)$:
\begin{enumerate}
	\item if either $\mu([\sigma'])=0$ or $\mu([\tau'])=0$ (or both), we define $sRZ^{u,v}_{\sigma', \tau'} :=  [\sigma'] \times [\tau']$;
	\item if either $\mu([\sigma'])=0$ or $\mu([\rho])=0$ (or both), we define $sRZ^{u,w}_{\sigma', \rho} :=  [\sigma'] \times [\rho]$;
	\item if either $\mu([\tau'])=0$ or $\mu([\rho])=0$ (or both), we define $sRZ^{v,w}_{\tau', \rho} :=  [\tau'] \times [\rho]$.
\end{enumerate} 

We let $RZ^{u,v}$ be the (countable, by Claim \ref{cla: R sets disjoint and countable}) union of all $RZ^{u,v}_{\sigma',\tau'}$  and all $sRZ^{u,v}_{\sigma',\tau'}$ defined this way on $X^u \times X^v$, this set has measure $0$.
\end{definition} 
Before going on, we notice that cylinder intersection sets involving  $R$ or $sR$ sets  behave well:

\begin{claim}\label{cla: inters with two R sets}
Any cylinder intersection involving  two $R$ sets, or two $sR$ sets, or one $R$ set and one $sR$ set, regardless of the third set, is $E$-homogeneous.
\end{claim}\label{cla: inters of R sets are hom}
\begin{claimproof}
	Consider the (cylinder) intersection of two such sets, say $S^{1,2}_{\sigma,\tau}\cap T^{1,3}_{\sigma',\tau'}$, where each $S,T \in \{R,sR\}$.
	Assume it is non-empty, without loss of generality we may assume $\sigma \sqsubseteq \sigma'$ (as both sets are rectangles).  Let $\upsilon_0 := I(\sigma,\tau)\cap \tau'$. 
	 
	 First assume $I(\sigma, \tau) \perp \tau'$ (i.e.~$\upsilon_0 \sqsubset \tau'$ and $\upsilon_0 \sqsubset I(\sigma, \tau)$).  Then $j := \sigma(|\upsilon_0|)+\tau(|\upsilon_0|)+\tau'(|\upsilon_0|) \neq 0$. Then for any $(x,y,z) \in [\sigma] \times [\tau] \times [\tau'] \supseteq  S^{1,2}_{\sigma,\tau}\cap T^{1,3}_{\sigma',\tau'}$ we have $(x,y,z)\in E^j$ (where we write $E^1$ for $E$ and $E^2$ for the complement of $E$).
	 
	 The case $\tau' \sqsubset I(\sigma, \tau)$ is impossible as by assumption $|\tau'| = |\sigma'|  \geq |\sigma| = |I(\sigma, \tau)|$.
	 
	 In the remaining case $ I(\sigma, \tau) \sqsubseteq \tau'$. 
	 
	 If $S = R$ we get that the intersection is empty: by Definition \ref{def: R sets def}(3,4) for $R^{1,2}_{\sigma,\tau}$ we have $[I(\sigma,\tau)] \cap X^{3} = \emptyset$, so also $[\tau'] \cap X^{3} = \emptyset$. Otherwise $S = sR$, and by definition of $sR^{1,2}_{\sigma,\tau}$, each of $ \sigma \upharpoonright (|\sigma|-1), \tau \upharpoonright (|\sigma|-1), I(\sigma,\tau)\upharpoonright (|\sigma|-1)$ splits. By Claim \ref{cla: one extra coordinate fixed}(2c) it follows that there is some $j \neq 0$ so that all $(x,y,z)$ in $[\sigma] \times [\tau] \cap [I(\sigma,\tau)]$, hence in particular all $(x,y,z) \in sR^{1,2}_{\sigma,\tau}\cap T^{1,3}_{\sigma',\tau'}$,  are in $E^j$.
%
%
%
%
\end{claimproof}

We next observe the following general principle, which we will use repeatedly: 
\begin{claim}\label{cla: gen princ}
	Suppose that we have points $x^u \in X^u$ for $u \in [3]$ so that $x^3 \neq I(x^1,x^2)$ and, taking the least $i$ so that $x^1(i)+x^2(i)+x^3(i)>0$, we have $\mu([x^u\upharpoonright i+1])>0$ for all $u \in [3]$. Then:
	\begin{enumerate}
		\item either for at least one pair $u \neq v \in [3]$, $[x^u \upharpoonright i+1] \times [x^v\upharpoonright i+1]$ is an $R$ set (not necessarily in $\mathcal{R}^{u,v}$),
		\item or for every pair $u \neq v \in [3]$, $[x^u \upharpoonright i+1] \times [x^v\upharpoonright i+1]$ is an $sR$ set (not necessarily in $\mathcal{R}^{u,v}$).
	\end{enumerate} 
\end{claim}
\begin{proof}
By minimality of $i$, for every $u \in [3]$ we have $x^u \upharpoonright i = I(x^v \upharpoonright i, x^w \upharpoonright i )$ and $x^u(i) \neq I(x^v, x^w)(i)$ for $\{u,v,w\} = \{1,2,3\}$.

	Assume  first one of $x^u\upharpoonright i, u \in [3]$ is non-splitting. Say, without loss of generality, $x^1 \upharpoonright i$ is non-splitting. And $\mu([x^1\upharpoonright i]),  \mu([x^2 \upharpoonright i+1], \mu([x^3\upharpoonright i+1])>0$ by assumption. 
	Then $[x^2 \upharpoonright i+1] \times [x^3\upharpoonright i+1]$ satisfies Definition \ref{def: R sets def} of an $R$-set.

Otherwise all of $x^u\upharpoonright i, u \in [3]$ are splitting, and by minimality of $i$ and assumption as above we get that each of the sets $[x^u \upharpoonright i+1] \times [x^v\upharpoonright i+1]$ satisfies Definition \ref{def: sR sets def} of an $sR$-set.
\end{proof}

\begin{claim}\label{cla: R set and disj from R hom}
 Assume $\{1,2,3\} = \{u,v,w\}$. For any $S^{u,v}_{\sigma, \tau} \in \mathcal{R}^{u,v}$, where $S \in \{R, sR\}$, the cylinder intersection set 
	\begin{gather*}
	\{(x^1,x^2,x^3) \in X^1 \times X^2 \times X^3 : (x^u,x^v) \in S^{u,v}_{\sigma, \tau} \land \\
	 (x^u,x^w) \notin \bigcup\mathcal{R}^{u,w} \land  (x^v,x^w) \notin \bigcup\mathcal{R}^{v,w} \}	
	\end{gather*}
	is $0$-homogeneous for $E$.
\end{claim}
\begin{claimproof}
	Without loss of generality $u=1,v=2,w=3$.
	
	For any $(x,y,z)$ in this set, consider the least $i$ such that $x(i)+y(i)+z(i)> 0$ (or $\infty$).
	
	Suppose first $i<|\sigma|-1$ (in particular $z \upharpoonright i = I(\sigma, \tau) \upharpoonright i$).
	Then, by Definition \ref{def: R sets def}(2) if $S=R$ or by Definition \ref{def: sR sets def}(2) if $S = sR$,  $\mu([x\upharpoonright i+1]) \geq \mu([\sigma])>0$, $\mu([y\upharpoonright i+1]) \geq \mu([\tau])>0$, and $\mu([z\upharpoonright i+1]) \geq \mu([I(\sigma,\tau) \upharpoonright |
	\sigma| - 1 ]) >0$.   Note that $z\upharpoonright i$ splits, because $\mu([I(\sigma,\tau) \upharpoonright i+1]) \geq  \mu([I(\sigma,\tau) \upharpoonright |\sigma|-1]) >0$ and $z(i) \neq I(\sigma, \tau)(i)$ by assumption on $i$. Therefore, by Claim \ref{cla: gen princ}, at least one of $[x\upharpoonright i] \times [z\upharpoonright i]$ or $[y\upharpoonright i] \times [z\upharpoonright i]$ is either an $R$ set or an $sR$ set, and the corresponding pair $(x,z)$ or $(y,z)$ is in it, hence in some set in $\mathcal{R}$, contradicting the assumption.

%
	 
%
	
	So we must have $i \geq |\sigma|-1$. 
	
	If $S = R$, by definition of $R^{1,2}_{\sigma,\tau}$, $I(\sigma, \tau) \upharpoonright (|\sigma|-1)$ splits and $I(\sigma, \tau)$ is not present, so $x(|\sigma|-1) + y(|\sigma|-1) + z(|\sigma|-1) \neq 0$, hence  $i\leq |\sigma| -1 $ by minimality, so we must have $i = |\sigma|-1$.  It follows that, for any $(x,y,z)$ in our set, 	
	$(x,y,z)\in E^{j_0}$ (where $j_0 \in \{1,2\}$ is as in Definition \ref{def: R sets def} for $R^{1,2}_{\sigma, \tau}$).
	
	Otherwise $S = sR$.  By definition of $sR^{1,2}_{\sigma,\tau}$ and Claim \ref{cla: one extra coordinate fixed}(2c), each of $\sigma\upharpoonright (|\sigma|-1), \tau \upharpoonright (|\sigma|-1), I(\sigma,\tau) \upharpoonright (|\sigma|-1) $ splits in exactly two directions, say  $I(\sigma,\tau) \upharpoonright (|\sigma|-1)$ splits into $j_1 \neq j_2 \in \{0,1,2\}$.
	
	Assume $(x,y,z)$ in our set is such that $i > |\sigma| - 1$ (so $I(\sigma, \tau) \sqsubseteq z$ and, without loss of generality, $I(\sigma, \tau)(|\sigma|-1) =  j_1$). If $\mu([I(\sigma, \tau)]) > 0$, we get that $[\tau] \times [I(\sigma, \tau)]$ is an $sR^{2,3}$-set containing $(y,z)$, hence $(y,z)$ belongs to some set in $\mathcal{R}^{2,3}$ --- contradicting the assumption.
If  $\mu([I(\sigma, \tau)]) = 0$, we get that $(x,z) \in sRZ^{1,3}_{\sigma, I(\sigma, \tau)}$ (by case (2) of Definition \ref{def: RZ sets} for $sR^{1,2}_{\sigma, \tau}$; and also $(y,z) \in sRZ^{2,3}_{\tau, I(\sigma, \tau)}$ by case (3) of Definition \ref{def: RZ sets} for $sR^{1,2}_{\sigma, \tau}$).

	 Hence, in either case, all $(x,y,z)$ in our set with $(x,z)$ not in $sRZ^{1,3}$ (so outside of a measure $0$ set of triples) must satisfy $i = |\sigma| - 1$, hence $z(|\sigma| - 1) = j_2$. By Definition of $sR^{1,2}_{\sigma,\tau}$ and Claim \ref{cla: one extra coordinate fixed}(2c)  there is some $j \neq 0$ so that all $(x,y,z)$ in $[\sigma] \times [\tau] \cap [(I(\sigma,\tau) \upharpoonright |\sigma|-1)^{\frown} j_2]$, hence all $(x,y,z)$ in our set minus a measure $0$ set,   are in $E^j$.
	\end{claimproof}

Putting  Claims \ref{cla: inters of R sets are hom} and \ref{cla: R set and disj from R hom} together, any cylinder intersection set in which one set is in $\mathcal{R}$ and the others are either in $\mathcal{R}$ or disjoint from all sets in $\mathcal{R}$, is $0$-homogeneous. It remains to partition the complement of the  union of $\mathcal{R}$ sets so that the remaining cylinder intersection sets are homogeneous.


\begin{claim}\label{cla: defining i(x)}
Every $x \in X^u$ satisfies exactly one of the following.
\begin{enumerate}
\item We have $\mu([x \upharpoonright i]) >0$ for all $i \in \mathbb{N}^{\ast}$.  Then $\mu(\{x\}) > 0$. In this case we say that $i(x) = \infty$.
	\item There exists least $i \in \mathbb{N}^{\ast}$ so that $\mu([x \upharpoonright i])  = 0$ (then $i \geq 1$ and $\mu([x \upharpoonright i-1]) > 0$). We denote this $i$ by $i(x)$ and say that \emph{$i(x)$ is a successor}.
	\item There exists a Dedekind cut $(L,U)$ of $(\mathbb{N}^{\ast}, <)$ so that $U$ has no least element, $\mu([x \upharpoonright i]) > 0$ for all $i \in L$, $\mu([x \upharpoonright i]) = 0$ for all $i \in U$, and for every $\varepsilon > 0$ there is some $i \in L$ with  $0<\mu([x \upharpoonright i])<\epsilon$.   In this case we let $i(x)$ be the element of the Dedekind completion of $(\mathbb{N}^{\ast}, <)$ realizing this cut, and say that \emph{$i(x)$ is a limit}.

	Given $x,y$ with at least one of $i(x), i(y)$ a limit, we evaluate $i(x) < i(y)$  in the Dedekind completion.
\end{enumerate}	
And sets of the form $\{x \in X^u : i(x) \textrm{ is a successor}\}$, $\{x \in X^u : i(x) \textrm{ is a limit}\}$ are in $\mathcal{B}_{\{u\}}$, and $\{(x,y) \in X^u \times X^v : i(x) < i(y)\} \in \mathcal{B}_{\{u,v\}} $.

\end{claim}
\begin{proof}
Let $x \in X^u$ be arbitrary, say $x = (x_i : i \in \mathbb{N}) / \mathcal{U}$ with $x_i \in X^u_i$.

First assume that $\mu([x \upharpoonright n]) > 0$ for all $n \in \mathbb{N}^{\ast}$. 

\noindent Then there exists $0 < \varepsilon \in \mathbb{R}$ so that $\mu([x \upharpoonright n]) \geq \varepsilon$ for all $n \in \mathbb{N}^{\ast}$. Indeed, otherwise, for every $\varepsilon \in \mathbb{Q}_{>0}$ there exists some $n_{\varepsilon} \in \mathbb{N}^{\ast}$ so that $\mathcal{M}^{\ast} \models m_{y} < \varepsilon. \varphi(y; x, n_{\varepsilon})$, where $\varphi(y; \alpha, \beta) = \forall z \in Z ( z < \beta  \rightarrow e(\alpha,z) = e(y,z) ) \in \mathcal{L}$. Then, by $\aleph_1$-saturation of the ultraproduct $\mathcal{M}^{\ast}$, we can find some $n \in \mathbb{N}^{\ast}$ so that $\mathcal{M}^{\ast} \models m_{y} < \varepsilon. \varphi(y; x, n)$ for all $\varepsilon \in \mathbb{Q}_{>0}$ simultaneously, so $\mu([x \upharpoonright n]) = 0$. 

\noindent Then for a $\mathcal{U}$-large set of $i \in \mathbb{N}$ we have: for all $n \in \mathbb{N}$, $\mu_i([x_i \upharpoonright n]) \geq \varepsilon/2$. But as $x_i \in X^u_i$ and $X^u_i$ is finite, we can chose $n_i \in \mathbb{N}$ so that $\{x_i\} = [x_i \upharpoonright n] \cap X^u_i$, so $\mu_i(\{x_i\}) \geq \varepsilon/2$.  Hence also $\mu(\{x\}) \geq \varepsilon/2$, so case (1) holds.

Otherwise we must have $\mu( [x \upharpoonright n]) = 0$ for some $n \in \mathbb{N}^{\ast}$, and the set of such $n$ is obviously upwards closed. If for every $\varepsilon >0$ there is some $n$ with $0 < \mu( [x \upharpoonright n]) < \varepsilon$ we are in case (3). 

Assume not. That is, there is some $\varepsilon^{\ast} >0$ so that for every $n \in \mathbb{N}^{\ast}$ we have $\mu( [x \upharpoonright n]) > 0 \Rightarrow \mu( [x \upharpoonright n]) \geq \varepsilon^{\ast}$. Then the set $\{n  \in \mathbb{N}^{\ast} : \mu( [x \upharpoonright n]) > 0 \}$ is definable (with parameters) in $\mathcal{M}^{\ast}$ via $\neg (m_{y} < \varepsilon^{\ast}/2). \varphi(y; x, n)$. But by \L os' theorem, every \emph{definable} subset of $\mathbb{N}^{\ast}$ has a minimal element, so we are in case (2).
\end{proof}

There could be a lot of rectangles of measure $0$ remaining which add up to a positive measure set. We will group them into finitely many groups so that all rectangles in each group behave in the same way with respect to $E$, and add the union of each group to the partition.

\begin{definition}
	First, when $(x,y)\in X^u\times X^v$ and $x,y$ are both \emph{atoms} (i.e.~$\mu(\{x\}), \mu(\{y\}) > 0$) and $(x,y)$ is not contained in any set in $\mathcal{R}^{u,v}$, we define $A_{x,y}^{u,v} := \{(x,y)\}$. Clearly there are only countably many such sets (as they are of positive measure).
\end{definition}




We  add sets where the first event to happen for $x,y,I(x,y)$ is non-splitting.

\begin{definition}\label{def: C leq sets}
	For $u \neq v \in [3]$ and $j_0 \in \{1,2\}$, let $C^{u \leq v}_{j_0}$ be  the set of all $(x,y) \in X^u \times X^v$ such that:
\begin{enumerate}
\item $(x,y)$ is not in any  set in $\mathcal{R}$,
\item $i(x) \leq i(y)$,
\item $i(x)$ is a successor,
\item there is a unique $j' \in \{0,1,2\}$ with $\mu([(x\upharpoonright i(x)-1)^{\frown} j'])>0$,
\item there is a unique $j'' \in \{0,1,2\}$ with $\mu([(y\upharpoonright i(x)-1)^{\frown} j''])>0$ (we could have $j'' = y(i(x)-1)$ or not),
\item $I(x,y)\upharpoonright (i(x)-1)$ is non-splitting (with the unique extension $j(x,y)$),
  \item \begin{itemize}
  	\item either:  ($u<v$ or $i(x) < i(y)$) and $x(i(x)-1) + j'' + j(x,y)=j_0$, 
  	\item or: $u > v$ and $i(x) = i(y)$ and $x(i(x)-1) + j'' + j(x,y)=j_0$ and $j' + y(i(x)-1) + j(x,y) = 0$.
  \end{itemize}
  
 \end{enumerate}

\end{definition}

\begin{definition}
We define $C^{u < v}_{j_0}$ as in Definition \ref{def: C leq sets} except we require:
\begin{itemize}
	\item  instead of (2): $i(x)<i(y)$, and
	\item  instead of (6) and (7):
	\begin{itemize}
	\item either $ I(x,y)\upharpoonright (i(x)-1)$ is not present and $j_0 = 1$;
	\item  or  $I(x,y)\upharpoonright (i(x)-1)$ is non-splitting with the unique extension $j(x,y)$, and  $x(i(x)-1)+j''+j=0$ and $j'+y(i(x)-1)+j=j_0$.
	\end{itemize}
\end{itemize}

\end{definition}

\begin{definition}
We define $C^{u = v}_{j_0}$ as in Definition \ref{def: C leq sets} except we require:
\begin{itemize}
	\item instead of (2): $i(x)=i(y) < \infty$, and
	\item  instead of (6) and (7):
	\begin{itemize}
	\item either $ I(x,y)\upharpoonright (i(x)-1)$ is not present and $j_0 = 1$;
	\item  or  $I(x,y)\upharpoonright (i(x)-1)$ is non-splitting with the unique extension $j(x,y)$, and $x(i(x)-1)+j''+j=0$ and  $j'+ y(i(x)-1) +j=0$  and $x(i(x)-1)+y(i(x)-1)+j=j_0$.
	\end{itemize}
\end{itemize}

\end{definition}

We also add sets where the first event to happen for $x,y,I(x,y)$ is splitting.

\begin{definition}\label{def: sR< sets}
	Given $u \neq v \in [3]$ and $j^u_1, j^u_2,  j^v, j^w_1, j^w_2 \in \mathbb{F}_3$ (where $w$ is such that $\{u,v,w\} = \{1,2,3\}$) and $s^u \in [2]$ so that $j^u_1 \neq j^u_2, j^w_1 \neq j^w_2$  we define $sC^{u < v}_{j^u_1, j^u_2, s^u, j^v, j^w_1, j^w_2}$ as the set of all $(x,y) \in X^u \times X^v$ such that:
\begin{enumerate}
\item $(x,y)$ is not in any set in $\mathcal{R}$,
\item $i(x) < i(y)$,
\item $i(x)$ is a successor,
\item there is a unique $j' \in \{0,1,2\}$ with $\mu([(x\upharpoonright i(x)-1)^{\frown} j'])>0$,
%
\item $y(i(x)-1) = j^v$,
\item 
\begin{itemize}
	\item if $s^u = 1$: $x(i(x)-1) = j^u_1$, $j' = j^u_2$ and there \emph{exists} $x' \in [(x\upharpoonright i(x)-1)^{\frown} j'] \cap X^{u}$ with $x <^u x'$,
	\item if $s^u = 2$:  $j' = j^u_1$, $x(i(x)-1) = j^u_2$ and \emph{for all}  $x' \in [(x\upharpoonright i(x)-1)^{\frown} j'] \cap X^{u}$ we have $x >^{u} x'$,
\end{itemize}
\item  $I(x,y) \upharpoonright (i(x)-1)$ splits, and $(j^w_1, j^w_2)$ is the least pair in the lexicographic order on $\{0,1,2\} \times \{0,1,2\}$ so that $j^w_1 \neq j^w_2$ and \emph{there exist}  $z_1 , z_2$  so that $z_1 <^w z_2$ and $z_t \in [(I(x,y)\upharpoonright i(x)-1)^{\frown} j^w_t ] \cap X^{w}$.
\end{enumerate}
\end{definition}

\begin{definition}\label{def: sR= sets}
	Given $u \neq v \in [3]$ and $j^u_1, j^u_2,  j^v_1, j^v_2, j^w_1, j^w_2 \in \mathbb{F}_3$ (where $w$ is such that $\{u,v,w\} = \{1,2,3\}$) and $s^u, s^v \in [2]$ so that $j^u_1 \neq j^u_2, j^v_1 \neq j^v_2, j^w_1 \neq j^w_2$  we define $sC^{u = v}_{j^u_1, j^u_2, s^u, j^v_1, j^v_2, s^v, j^w_1, j^w_2}$ as the set of all $(x,y) \in X^u \times X^v$ such that:
\begin{enumerate}
\item $(x,y)$ is not in any set in $\mathcal{R}$,
\item $i(x) = i(y)$ is a successor,
\item ($s^u = 1$ and $x(i(x)-1) = j^u_1$) or ($s^u = 2$ and $x(i(x)-1) = j^u_2$),
\item ($s^v = 1$ and $y(i(x)-1) = j^v_1$) or ($s^v = 2$ and $y(i(x)-1) = j^v_2$),
\item $x\upharpoonright (i(x)-1)$ splits in exactly two directions $j^u_1 \neq j^u_2$, $y \upharpoonright (i(x)-1)$ splits in exactly two directions $j^v_1 \neq j^v_2$, and $I(x,y) \upharpoonright (i(x)-1)$ splits in exactly two directions $j^w_1\neq j^w_2$,
\item  $x_1 <^u x_2$ for all $x_t \in [(x\upharpoonright i(x)-1)^{\frown} j^u_t] \cap X^u$, $y_1 <^v y_2$ for all $y_t \in [(y \upharpoonright i(x)-1)^{\frown} j^v_t] \cap X^v$,  $z_1 <^w z_2$ for all $z_t \in [(I(x,y)\upharpoonright i(x)-1)^{\frown} j^w_t ] \cap X^{w}$.
\end{enumerate}
\end{definition}


\begin{definition}
	Finally, $L^{u<v}\subseteq X^u\times X^v$ is the set of $(x,y)$ so that $i(x)<i(y)$ and $i(x)$ is a limit and not contained in any of the sets above, and $L^{u=v}$ is the set of $(x,y)$ so that $i(x)=i(y)$ is a limit and not contained in any of the sets above.
\end{definition}

\begin{definition}
	 For fixed $u < v \in [3]$, we add to $\mathcal{P}^{u,v}$ all sets in $\mathcal{R}^{u,v}$, all sets of the form $A_{x,y}^{u,v}$, all sets of the form $C^{u < v}$ \emph{and} $C^{v < u}$, all sets of the form   $C^{u \leq v}_{j_0}$ \emph{and} $C^{v \leq u}_{j_0}$ (we split into two cases in Definition \ref{def: C leq sets}(7) to prevent  $C^{u \leq v}_{j_0}$ from overlapping with any of the $C^{v \leq u}_{j'_0}$ on points with $i(x)=i(y)$), all sets $C^{u = v}_{j_0}$, all sets of the form $sC^{u<v}$ \emph{and} $sC^{v<u}$, and all sets of the form $L^{u<v}$ and $L^{v<u}$, and all sets $L^{u=v}$.
Note that all these sets are pairwise disjoint.
\end{definition}

\begin{lemma}
 For every $u<v \in [3]$, $\mathcal{P}^{u,v}$  is a partition of $X^u \times X^v$, up to a measure $0$ set.
\end{lemma}
\begin{proof}
  Consider some $(x,y) \in X^u \times X^v$ not contained in any set in $\mathcal{R}^{u,v}$ or $Z^{u,v}$, or $RZ^{u,v}$.
  
  \

\noindent 1)  Assume $i(x)=i(y) < \infty$ is a successor. 

Then $x\upharpoonright (i(x)-1)$ splits. Indeed, we have $\mu([x \upharpoonright i(x)]) = 0$ and $\mu(x\upharpoonright (i(x)-1)) > 0$ by definition of $i(x)$, so $\mu([(x\upharpoonright (i(x)-1)) {}^\frown j']) > 0$ for some $j' \neq j^x := x(i(x)-1)$. Hence $(x\upharpoonright (i(x)-1)) {}^\frown j'$ is present, and $x \upharpoonright i(x)$ is present as $x \in [x \upharpoonright i(x)] \cap X^u$. Since $(x,y) \notin Z^{u,v}$, $x\upharpoonright i(x)-1$ is not a large split, so $j'$ is unique with $\mu([(x\upharpoonright i(x)-1) {}^{\frown} j'])>0$.

Similarly, $y\upharpoonright (i(y)-1)$ splits and there is a unique $j'' \neq j^y := y(i(y)-1)$ with $\mu([(y\upharpoonright i(y)-1) {}^{\frown} j''])>0$.

\

\noindent 1.1) Assume  $I(x,y)\upharpoonright (i(x)-1)$ is non-splitting, with unique extension $j := j(x,y)$.

If $j^x + j'' + j = 0$ and $j' +  j^y + j = 0$, then necessarily $x(i(y)-1) + y(i(y)-1) + j \neq 0$ (using $j' \neq x(i(y)-1) $), so $(x,y) \in C^{u = v}_{j^x + j^y + j}$.

If $j^x  + j'' + j \neq 0$ and $u < v$, we get $(x,y) \in C^{u \leq v}_{j^x  + j'' + j}$ (by the first case of Definition \ref{def: C leq sets}(7)). 

If $j' +  j^y + j \neq 0$ and $u > v$, reversing the roles of the coordinates we get $(x,y) \in C^{v \leq u}_{ j^y + j' + j}$ (by the first case of Definition \ref{def: C leq sets}(7)).

If  $j' +  j^y + j = 0$, $u < v$ and $j^x  + j'' + j \neq 0$, we get $(x,y) \in C^{u \leq v}_{j^x +  j'' + j}$ (falling into the second case of  Definition \ref{def: C leq sets}(7)).

Finally, if  $j^x  + j'' + j = 0$, $u < v$ and $j' +  j^y + j \neq 0$, reversing the roles of the coordinates we get $(x,y) \in C^{v \leq u}_{j^y +  j' + j}$ (falling into the second case of  Definition \ref{def: C leq sets}(7)).

%
%
%
%
%

\

\noindent 1.2)  Assume $I(x,y)\upharpoonright (i(x)-1)$ splits. 

Then, by Claim \ref{cla: one extra coordinate fixed}:
\begin{itemize}
	\item  $x\upharpoonright (i(x)-1)$ splits in exactly two directions $j' \neq j^x$, and there is $\square^u \in \{<,>\}$ so that $x_1 \square^u x_2$ for all $x_1 \in [(x\upharpoonright i(x)-1)^{\frown} j^x], x_2 \in [(x\upharpoonright i(x)-1)^{\frown} j']$; if $\square^u$ is ``$<$'' we let  $s^u := 1, j^u_1 := j^x, j^u_2 := j'$, and if $\square^u$ is ``$>$'' we let $s^u := 2, j^u_1 := j', j^u_2 := j^x$;
	\item  $y \upharpoonright (i(x)-1)$ splits in exactly two directions $j'' \neq j^y$, and there is $\square^v \in \{<,>\}$ so that $y_1 \square^v y_2$ for all $y_1 \in [(y \upharpoonright i(x)-1)^{\frown} j^y], y_2 \in [(y \upharpoonright i(x)-1)^{\frown} j'']$; if $\square^v$ is ``$<$'' we let  $s^v := 1, j^v_1 := j^y, j^v_2 := j''$, and if $\square^u$ is ``$>$'' we let $s^v := 2, j^v_1 := j'', j^v_2 := j^y$; 
	\item  $I(x,y)\upharpoonright (i(x)-1)$ splits in exactly two directions $j^w_1 \neq j^w_2$ so that $z_1 <^w z_2$ for all $z_t \in [(I(x,y)\upharpoonright i(x)-1)^{\frown} j^w_t ] \cap X^{w}$.
\end{itemize}
 Then $(x,y)$ is contained in $sC^{u = v}_{j^u_1, j^u_2, s^u, j^v_1, j^v_2, s^v, j^w_1, j^w_2}$.
 
 \

\noindent 1.3) Assume $I(x,y)\upharpoonright (i(x)-1)$  is not present.
 
 Then $(x,y) \in C^{u=v}_{1}$.

 \

\noindent 2) Assume $i(x)=i(y)$ is a limit.

 If $i(x) = i(y) < \infty$ then $(x,y)$ is contained in $L^{u=v}$, and if $i(x)=i(y)=\infty$, they $(x,y)$ is contained in $A_{\{x,y\}}$ by Claim \ref{cla: defining i(x)}.

\

\noindent 3) Otherwise, one of $i(x), i(y)$ is strictly smaller; without loss of generality, assume $i(x)<i(y)$.

\

\noindent 3.1) Assume $i(x)$ is a successor. 

As $(x,y) \notin Z^{u,v}$, both $x \upharpoonright (i(x)-1)$ and $y \upharpoonright (i(x)-1)$ are not large splits. Let $j' \neq x(i(x)-1)$ be unique with $\mu([(x \upharpoonright i(x)-1)^{\frown} j']) > 0$. 

As $i(x) < i(y)$, we must have $\mu([y \upharpoonright (i(x)-1)]) > 0$ (so $y(i(x)-1) = j''$).

\

\noindent 3.1.1) Assume $I(x,y)\upharpoonright (i(x)-1)$ does not split, with unique extension $j$.
If $x(i(x)-1)+j''+j \neq 0$ we have $(x,y) \in C^{1 \leq 2}_{x(i(x)-1)+j''+j}$.

 Otherwise $x(i(x)-1)+j''+j = 0$ and $j'+y(i(x)-1)+j \neq 0$, so $(x,y) \in C^{1<2}_{x(i(x)-1)+y(i(x)-1)+j}$.
 
 \

\noindent 3.1.2) Assume $I(x,y)\upharpoonright (i(x)-1)$ splits.

It could split in three directions. Nevertheless, there exists at least one pair $j^w_1 \neq j^w_2$  so that $z_1 <^w z_2$ and $z_t \in [(I(x,y)\upharpoonright i(x)-1)^{\frown} j^w_t ] \cap X^{w}$, and we pick the least such pair $(j^w_1, j^w_2)$ lexicographically. 

If there \emph{exists} $x' \in [(x\upharpoonright i(x)-1)^{\frown} j'] \cap X^{u}$ with $x <^u x'$, we set $j^u_1 := x(i(x)-1)$, $j^u_2 := j'$ and $s^u := 1$.

Otherwise for all $x' \in [(x\upharpoonright i(x)-1)^{\frown} j'] \cap X^{u}$ we have $x >^u x'$, then we set $j^u_2 := x(i(x)-1)$, $j^u_1 := j'$ and $s^u := 2$.

Then 
 $(x,y)$ is in 
 $sC^{u < v}_{j^u_1, j^u_2, s^u, y(i(x)-1), j^w_1, j^w_2}$.

\

\noindent 3.1.3) Assume $I(x,y)\upharpoonright (i(x)-1)$ is not present.

Then $(x,y) \in C^{1<2}_1$.

\

\noindent  3.2) If $i(x)$ is a limit and $<\infty$, and $(x,y)$ is not in any of the sets above, we get that  $(x,y)$ is either in $L^{u<v}$ or in $L^{u=v}$. Otherwise $i(x) = i(y) = \infty$ by minimality of $i(x)$, and we have $(x,y) \in A^{u,v}_{x,y}$.
\end{proof}

\begin{theorem}
  Any positive measure cylinder intersection of three sets in $\mathcal{P}^{1,2}, \mathcal{P}^{1,3}, \mathcal{P}^{2,3}$ respectively is $0$-homogeneous for $E$.
\end{theorem}
\begin{proof}
We consider a cylinder intersection set $S = S^{1,2} \cap S^{1,3} \cap S^{2,3}$ with $S^{u,v} \in \mathcal{P}^{u,v}$.

  We have already considered the case where at least one of the three intersecting sets is in $\mathcal{R}$ (Claims \ref{cla: inters with two R sets} and \ref{cla: R set and disj from R hom}). 
  
  Next, we consider cases involving $A$ sets. If two sets are $A$ sets, the third must be as well (since the intersection is non-empty), and the cylinder intersection has at most one element and is certainly $E$-homogeneous.

  Observe that any combination of the remaining sets uniquely determines the order of $(i(x),i(y),i(z))$ (in the same way for all $(x,y,z)$ in the cylinder intersection $S$). So let us assume that $i(x)$ is minimal (possibly equal to other values). The sets in the cylinder intersection also determine whether $i(x)$ is a successor, a limit, or $\infty$.

  \

\noindent 1) \emph{Assume first $i(x)$ is a successor}, so the two sets $S^{1,2} \cap S^{1,3}$ containing the coordinate for $X^1$ must be $C$ or $sC$  sets. 
  
    Fix $(x,y,z)$ in the intersection $S$, and assume there is an $i<i(x)-1$ with $x(i)+y(i)+z(i)>0$, fix a minimal such $i$. Then $\mu([x \upharpoonright i+1] ) \geq \mu([x\upharpoonright i(x)-1])>0$ (by definition of $i(x)$) and $\mu([y \upharpoonright i+1] ) \geq \mu([y\upharpoonright i(x)-1])>0,  \mu([z  \upharpoonright i+1] ) \geq \mu([z\upharpoonright i(x)-1])>0$ (by definition of $i(y), i(z)$ using that $i(x) \leq i(y), i(z)$ by assumption). By Claim \ref{cla: gen princ}, one of the pairs $(x,y), (x,z), (y,z)$ would be contained in either an $R$ set or in an $sR$ set, hence in some set in $\mathcal{R}$. Since this cannot happen, 
    \begin{itemize}
    	\item every $(x,y,z) \in S$ satisfies $z\upharpoonright (i(x)-1) = I(x ,y)\upharpoonright (i(x)-1)$.
    \end{itemize} 
  
\noindent 1.1) Assume at least one of the sets $S^{1,2}, S^{1,3} $ is a $C$ set, say $S^{1,2}$ is a $C$ set.   Then $I(x,y) \upharpoonright  (i(x) - 1)$ does not split (for $S^{1,2}$ either a $C^{1<2}$ set or a $C^{1=2}$ set ---  it has to be present since $z$ is in $[I(x,y) \upharpoonright  (i(x) - 1)]$ by the above), with the unique extension $j(x,y)$, so we must have 
 \begin{itemize}
 	\item $j(x,y) = z(i(x)-1)$. 
 \end{itemize}

\ 
  
\noindent 1.1.1)   Assume $S^{1,2} = C^{1 \leq 2}_{j_0}$ for $j_0 \in \{1,2\}$ first. 
  
  Fix $x$, and consider the set of all $y$ so that $(x,y) \in C^{1 \leq 2}_{j_0}$ but $j''(x,y) \neq y(i(x)-1) $. 
  Each such $y$ belongs to a set of measure $0$ of the form $[\sigma {}^\frown j^{\ast}]$ for some $\sigma \in \mathbb{F}_{3}^{i(x)-1}$ with $\mu([\sigma]) > 0$  (namely, $\sigma := y \upharpoonright (i(x) -1)$) and some $j^{\ast} \in \{0,1,2\}$. As all such $[\sigma]$ of fixed length are pairwise disjoint, there can be at most countably choices for $\sigma$. Hence the set of all such $y$ has measure $0$. Then, by Fubini,  all pairs $(x,y) \in C^{1 \leq 2}_{j_0}$ outside of a measure $0$ set satisfy $ j''(x,y) = y(i(x)-1)$. Hence, by definition of $C^{1 \leq 2}_{j_0}$,  for all $(x,y,z) \in S$ outside of a measure $0$ set we must have $x(i(x)-1)+y(i(x)-1)+j=j_0$, so $x(i(x)-1)+y(i(x)-1)+z(i(x)-1)=j_0$, so $(x,y,z) \in E^{j_0}$.

  \

\noindent 1.1.2) Assume $S^{1,2} = C^{1<2}_{j_0}$. 

Then $i(x) < i(y)$, so we must have $j'' = y(i(x)-1)$, $x(i(x)-1)+y(i(x)-1)+j=0$ and $j'+y(i(x)-1)+j=j_0$. Then all $(x,y,z) \in S$ satisfy:
\begin{itemize}
	\item $j = I(x,y)(i(x)-1)$,
	\item $\mu([(x\upharpoonright i(x)-1)^{\frown} j'])>0$ for a unique $j' \neq x( i(x)-1)$,
	\item $\mu([(y\upharpoonright i(x)]) >0$ as $i(x) < i(y)$,
	\item $\mu([(z\upharpoonright i(x)]) >0$ (as $I(x,y) \upharpoonright  (i(x) - 1) = z\upharpoonright (i(x)-1)$ does not split and has positive measure as $i(x) \leq i(z)$, hence $z (i(x)-1)$ is the unique extension, necessarily of positive measure).
\end{itemize}
It follows that $(x,y)$ is in $RZ^{1,2}_{(x\upharpoonright i(x)-1)^{\frown} j', y \upharpoonright i(x) }$ (adjacent to $R^{1,2}_{(x\upharpoonright i(x)-1)^{\frown} j', y\upharpoonright i(x)}$). This shows that $S$ is empty in this case.


\

\noindent 1.1.3) Assume $S^{1,2} = C^{1=2}_{j_0}$. 

By the above we have $z\upharpoonright (i(x)-1) = I(x ,y)\upharpoonright (i(x)-1)$, and by definition of $C^{1=2}_{j_0}$ we have $I(x,y) \upharpoonright  (i(x) - 1)$ does not split, hence $z (i(x)-1) = j$ is the unique extension.

So for every $(x,y,z)$ in $S$ we have $x(i(x)-1) + y(i(x)-1) + z(i(x)-1) = x(i(x)-1) + y(i(x)-1) + j = j_0$, hence $S$ is contained in $E^{j_0}$.

\

\noindent 1.2) So we assume now both $S^{1,2}$ and $S^{1,3}$ are $sC$ sets. Say $S^{1,2}$ is either $sC^{1 = 2}_{j^1_1, j^1_2, s^1, j^2_1, j^2_2, s^2, j^3_1, j^3_2}$ or $sC^{1 < 2}_{j^1_1, j^1_2, s^1, j^2, j^3_1, j^3_2}$.   And $S^{1,3}$ is either $sC^{1 = 3}_{k^1_1, k^1_2, r^1, k^3_1, k^3_2, r^3, k^2_1, k^2_2}$ or $sC^{1 < 3}_{k^1_1, k^1_2, r^1, k^3, k^2_1, k^2_2}$.

 Assume $(x,y,z) \in S$ is arbitrary. We have $z\upharpoonright (i(x)-1) = I(x ,y)\upharpoonright (i(x)-1)$ (by the above), $x \upharpoonright (i(x)-1)$ splits (as $(x,y)$ is in the $sC^{1=2}$ or $sC^{1<2}$ set above),  $y \upharpoonright (i(x)-1)$ splits (either because $(x,y)$ is in the $sC^{1=2}$ set above, or because $(x,z)$ is in the $sC^{1=3}$ or $sC^{1<3}$ set above), and $z \upharpoonright (i(x)-1)$ splits (as $(x,y)$ is in the $sC^{1=2}$ or $sC^{1<2}$ set above). Then, using Claim \ref{cla: gen princ}(2) and Definitions \ref{def: sR< sets} and \ref{def: sR< sets}, we have: 
   
   \begin{itemize}
   	\item  $x \upharpoonright (i(x)-1)$ splits in exactly two directions $j^{1}_1  \neq  j^{1}_2$, and $x(i(x)-1) = j^1_{s^1}$ (as $(x,y)$ is in the $sC^{1=2}$ or $sC^{1 <2}$ set above),
   	\item  $y \upharpoonright (i(x)-1)$ splits in exactly two directions $j^{2}_1 \neq  j^{2}_2$ and  $y(i(x)-1) = j^2_{s^2}$ (given by $(x,y) \in sC^{1=2}$; and if not using that $(x,z)$ is in $sC^{1=3}$ or $sC^{1<3}$ and taking $j^2_1 := k^2_1, j^2_2 := k^2_2$ and, as $(x,y) \in sC^{1 \leq 2}$, we take $s^2$ so that $j^2 = k^2_{s^2}$,
   	\item  $z \upharpoonright (i(x)-1)$ splits in exactly two directions $j^{3}_1 \neq j^{3}_2$ (as $(x,y)$ is in the $sC^{1=2}$ or $sC^{1 <2}$ set above) and $z(i(x)-1) = j^3_{s^3}$ (where if $(x,z)$ is in the $sC^{1=3}$ set above we take $s^3 := r^3$; and if  $(x,z)$ is in the $sC^{1<3}$ set above we take  $s^3$ so that $k^3 = j^3_{s^3}$),
   	\item  $x_1 <^u x_2$ for all $x_t \in [(x\upharpoonright i(x)-1)^{\frown} j^u_t] \cap X^u$, $y_1 <^v y_2$ for all $y_t \in [(y \upharpoonright i(x)-1)^{\frown} j^v_t] \cap X^v$,  $z_1 <^w z_2$ for all $z_t \in [(I(x,y)\upharpoonright i(x)-1)^{\frown} j^w_t ] \cap X^{w}$. In case that some of our sets were $sC^{<}$ sets, we use that, by  Claim \ref{cla: gen princ}(2), when all three prefixes split, each of this ``for all'' is equivalent to ``for some'', which was used in Definition \ref{def: sR< sets}.
   \end{itemize}

   As in Claim \ref{cla: one extra coordinate fixed}(2c),  there exists some $(t^1, t^2, t^3) \in [2]$ \emph{independent of $(x,y,z)$} so that: 
     	\begin{itemize}
	\item either  $s^\alpha \leq t^{\alpha}$ for all $\alpha \in [3]$  and $\sum_{\alpha \in [3]} j^{\alpha}_{t_{\alpha}} = 1$: then, by monotonicity of $E$, $S \subseteq E^1$;
	\item or $s^\alpha \geq t^{\alpha}$ for all $\alpha \in [3]$  and $\sum_{\alpha \in [3]} j^{\alpha}_{t_{\alpha}} = 2$: then, by monotonicity of $E$, $S \subseteq E^2$.
	\end{itemize}

   \

%

 \noindent  2) Next, suppose we are in a cylinder intersection set $S$ where \emph{$i(x)$ is a limit}. 
  
  We claim this cylinder intersection set has measure $0$. Fix $(y,z) \in X^v \times X^w$, and consider the slice $S_{y,z}$ consisting of those $x$ with $(x,y,z)$ in the cylinder intersection set $S$. By Claim \ref{cla: defining i(x)}, for any $\varepsilon>0$ and any $x\in S_{y,z}$, there is some $\sigma$ with $x\in[\sigma]$ and $0<\mu([\sigma])<\varepsilon$; therefore we can cover $S_{y,z}$ with sets of the form $[\sigma]$ with $0<\mu([\sigma])<\varepsilon$.

  We claim that $S_{y,z}$ must be contained in a single such set (then, by Fubini, $\mu(S) \leq \varepsilon $ for all $\varepsilon >0$, hence equal to $0$). Indeed, assume we have  $\sigma \neq \sigma'$ so that $\mu([\sigma]), \mu([\sigma']) > 0$ and $x \in [\sigma] \cap S_{y,z}, x' \in [\sigma'] \cap S_{y,z}$.   Then $\sigma, \sigma'$ have some maximum common initial segment $\sigma_0=\sigma\cap\sigma'$, with $\sigma_0 \in \mathbb{F}_3^{n}$ for some $n \in \mathbb{N}^{\ast}$ (as both $\sigma^u, (\sigma')^u$ are present). 
  So there are some $j\neq j' \in \{0,1,2\}$ with $\sigma_0 {}^{\frown} j\sqsubseteq \sigma$ and $\sigma_0 {}^{\frown} j'\sqsubseteq \sigma'$. 
  
Then at least one of  $j+ y(|\sigma_0|)+z(|\sigma_0|)\neq 0$ or $j'+ y(|\sigma_0|)+z(|\sigma_0|)\neq 0$, say the former one (the latter case is symmetric).

As $x \in [\sigma]$, we have $j = x(|\sigma_0|)$ and $\mu(x \upharpoonright |\sigma_0|+1) >0$. As $i(x) \leq i(y), i(z)$ we have $\mu([y \upharpoonright |\sigma_0|+1]) >0 $, $\mu([z \upharpoonright |\sigma_0|+1]) >0$.
  
  By Claim \ref{cla: gen princ}, this implies that at least one of $(x,y), (x,z), (y,z)$ is in some set in $\mathcal{R}$, contrary to the assumption.

  \

\noindent 3)  In the final case, $i(x)=\infty$. In this case, since $i(x)$ is minimal, $i(x)=i(y)=i(z)=\infty$, and so all three cylinder sets are $A$ sets by Claim \ref{cla: defining i(x)}, and therefore the cylinder intersection set is a singleton (or empty), so certainly $E$-homogeneous.
\end{proof}

\end{proof} 

\section{Acknowledgements}

We would like to thank  Karim Adiprasito, Vitaly Bergelson, Ehud Hrushovski, Paolo Marimon, Anand Pillay, Sergei Starchenko, Terence Tao and Tamar Ziegler for helpful conversations around the topic of the paper. Chernikov was partially supported by the NSF Research Grant DMS-2246598. Towsner was partially supported by the NSF Research Grant DMS-2054379.

\bibliographystyle{alpha}
\bibliography{refs}

\end{document}